\theoremstyle{plain}
\newtheorem{theorem}{Theorem}[section]
\newtheorem{prop}[theorem]{Proposition}
\newtheorem{problem}[theorem]{Problem}
\theoremstyle{definition}
\newtheorem{definition}{Definition}[section]
\theoremstyle{remark}
\numberwithin{equation}{section}
\def\relsys#1{\mathbf {#1}}
\def\rel#1#2{R_{\mathbf{#1}}^{#2}}
\def\ext#1#2{X_{\mathbf{#1}}^{#2}}
\def\Rel{\mathop{\mathrm{Rel}}\nolimits}
\DeclareMathOperator{\Aut}{Aut}
\DeclareMathOperator{\Inv}{Inv}
\DeclareMathOperator{\rc}{rc}
\DeclareMathOperator{\lc}{lc}
\def\Age{\mathop{\mathrm{Age}}\nolimits}
\def\Forb{\mathop{\mathrm{Forb_h}}\nolimits}
\def\F{{\mathcal F}}
\def\K{{\mathcal K}}
\def\Fraisse{Fra\"{\i}ss\' e}
\newcommand{\UNION}{\bigcup}
\begin{document}

\title{Complexities of relational structures}

\author{David Hartman}
\author{Jan Hubi\v{c}ka}
\author{Jaroslav Ne\v{s}et\v{r}il}

\affil{Computer Science Institute of Charles University in Prague\footnote{The Computer Science Institute of Charles University (IUUK) is supported by grant ERC-CZ LL-1201 of the Czech Ministry of Education and CE-ITI P202/12/G061 of GA\v CR}, Malostransk\' e n\' am\v est\' i 25, 118~00, Prague 1, Czech Republic}

\date{}

\maketitle

\begin{abstract}
The relational complexity, introduced by G. Cherlin, G. Martin, and D. Saracino,
is a measure of ultrahomogeneity of a relational structure. It provides
an information on minimal arity of additional invariant relations needed to turn
given structure into an ultrahomogeneous one.  The original motivation was
group theory. This work focuses more on structures and provides an alternative
approach. Our study is motivated by related concept of lift complexity studied by
Hubi\v{c}ka and Ne\v{s}et\v{r}il.
\end{abstract}

\section{Introduction}
A {\em relational structure} (or simply {\em structure}) $\relsys{A}$ is a pair
$(A,(\rel{A}{i}:i\in I))$, where $\rel{A}{i}\subseteq A^{\delta_i}$ (i.e.,
$\rel{A}{i}$ is a $\delta_i$-ary relation on $A$). The family $(\delta_i: i\in
I)$ is called the {\em type} $\Delta$. The type is assumed to be fixed
and understood from context throughout this paper. The class of all (countable) relational structures of
type $\Delta$ will be denoted by $\Rel(\Delta)$.  If the set $A$ is finite we
call $\relsys A$ a {\em finite structure}.  We consider only countable or
finite structures. 

We see relational structures as a generalization of graphs and digraphs, and
adopt standard graph theoretic terms (such as isomorphism, homomorphism or
connected structures). We also use standard model theoretic notation, see
\cite{Hodges:1993}.

\begin{figure}[ht!]
\centerline{\includegraphics[width=6cm]{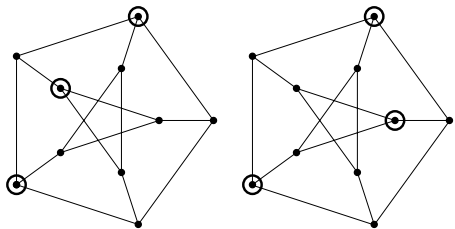}}
\caption{Two types of independent sets of size 3 in the Petersen graph.}
\label{fig:petersenhom}
\end{figure}
A relational structure $\mathbf{A}$ is called {\em ultrahomogeneous} (or simply {\em homogeneous}) if every isomorphism between two induced finite substructures of $\relsys{A}$ can be extended to an automorphism of $\relsys{A}$. This property represents high degree of symmetry. Most of the usual properties of symmetry are implied by ultrahomogeneity --- for example, in the finite case, the vertex-transitivity, the edge-transitivity or even the distance transitivity are implied by ultrahomogeneity. On the other hand usual examples of highly symmetric graphs are not necessarily ultrahomogeneous. The Petersen graph is a well known example of a symmetric and non-ulrahomogeneous graph; it contains two independent sets of size three (depicted in Figure \ref{fig:petersenhom}) that cannot be mapped one to another by an automorphism. 

There exists a long standing classification program of ultrahomogeneous structures, see for example~\cite{LachlanWoodrow:1980,Cherlin:1998}. The results are nontrivial even in the seemingly simple case of finite undirected graphs without loops. Catalog of those graphs was given by Gardiner~\cite{Gardiner:1976}: a finite graph is ultrahomogeneous if and only if it is isomorphic to one of the following: (1) disjoint union of finitely many copies of a complete graph $K_r$, (2) complete multipartite graph (complement of (1)), (3) a $5$-cycle, and (4) a line graph $L(K_{3,3})$ of the complete bipartite graph $K_{3,3}$.

By the above classification cycles are ultrahomogeneous up to size $5$ ($C_3$ is a complete graph, $C_4$ is a complete bipartite graph, $C_5$ is a sporadic case). Similarly as the Petersen graph, $C_6$ has two different independent sets of size $2$ (a pair of vertices of distance 2 and a pair of vertices of distance 3). $C_6$ can be turned into an ultrahomogeneous structure by introducing another type of edges, say red edges, and by connecting every pair of vertices of distance 2 by a red edge. Resulting structure is ultrahomogeneous and we consider such extended structure to be a {\em homogenization} of structure. Cherlin~\cite{Cherlin:2000}, when explaining more general theory of ultrahomogeneous structures~\cite{KnightLachlan:1985}, notices that such a process is more general --- classifications of ultrahomogeneous structures usually contain sporadic ultrahomogeneous structures that can be considered as members of regular ultrahomogeneous families in an extended language. Althrough $C_5$ is a sporadic case of ultrahomogeneous graph, it is a part of a family of graph cycles that are all ultrahomogeneous as metric graphs. Homogenization is also important concept of constructing universal structures \cite{Covington:1990, BubickaNesetril:toapear}.

\section{Complexity of relational structures}

We introduce our two notions of complexity; the relational complexity and the lift complexity. First, however, we take time to review the process of adding new types of relations into a structure more formally.

Let $\Delta'=(\delta'_i:i\in I')$ be a type containing type $\Delta$. (By this
we mean $I\subseteq I'$ and $\delta'_i=\delta_i$ for $i\in I$.) Then every
structure $\relsys{X}\in \Rel(\Delta')$ may be viewed as a structure
$\relsys{A}=(A,(\rel{A}{i}: i\in I))\in \Rel(\Delta)$ together with some
additional relations for $i\in I'\setminus I$. To make this more
explicit, these additional relations will be denoted by $\ext{X}{i}, i\in
I'\setminus I$. Thus a structure $\relsys{X}\in \Rel(\Delta')$ will be written
as $$\relsys{X}=(A,(\rel{A}{i}:i\in I),(\ext{X}{i}:i\in I'\setminus I)),$$ and,
abusing notation, more briefly as
$$\relsys{X}=(\relsys{A},\ext{X}{1},\ext{X}{2},\ldots, \ext{X}{N}).$$

We call $\relsys{X}$ a {\em lift} of $\relsys{A}$ and $\relsys{A}$ is called
the {\em shadow} of $\relsys{X}$. In this sense the class $\Rel(\Delta')$ is
the class of all lifts of $\Rel(\Delta)$.  Conversely, $\Rel(\Delta)$ is the
class of all shadows of $\Rel(\Delta')$. If all extended relations are unary,
the lift is called {\em monadic}.  In the context of monadic lifts, the {\em
color} of vertex $v$ representing unary relation is the set $\{i;(v)\in \ext{X}{i}\}$.
Note that a lift is also in the model-theoretic setting called an {\em
expansion} (as we are expanding our relational language) and a shadow a {\em reduct} (as we are reducing it).  Unless stated explicitely, we shall
use letters $\relsys{A}, \relsys{B}, \relsys{C}, \ldots$ for shadows (in
$\Rel(\Delta)$) and letters $\relsys{X}, \relsys{Y}, \relsys{Z}$ for lifts (in
$\Rel(\Delta'))$.

The {\em lift complexity}, $\lc(\relsys{A})$, of
relational structure $\relsys{A}$ is the least $k$ such that there exists a lift
$\relsys{X}=(\relsys{A},\ext{X}{1},\ext{X}{2},\ldots, \ext{X}{N})$ of $\relsys{A}$ that is
ultrahomogeneous and all the relations $\ext{X}{1},\ext{X}{2},\ldots,
\ext{X}{N}$ have arity at most $k$.

Let $\relsys{A}$ be a relational structure and let $\Aut(\relsys{A})$ be the automorphism group of $\relsys{A}$. A $k$-ary relation $\rho \subseteq A^k$ is an \emph{invariant} of $\Aut(\relsys{A})$ if $(\alpha(x_1), \ldots, \alpha(x_k)) \in \rho$ for all $\alpha \in \Aut(\relsys{A})$ and all $(x_1, \ldots, x_k) \in \rho$. Let $\Inv_k(\relsys{A})$ denote the set of all $k$-ary invariants of $\Aut(\relsys{A})$ and let $\Inv(\relsys{A}) = \UNION_{k \ge 1} \Inv_k(\relsys{A})$, $\Inv_{\leq k}(\relsys{A})=\UNION_{1\leq k'\leq k} \Inv_{k'}(\relsys{A})$. It easily follows that lift $(A, (\rel{A}{i}:i\in I),\Inv(\relsys{A}))$ (possibly of infinite type) is an ultrahomogeneous structure for every structure $\relsys{A} = (A,(\rel{A}{i}:i\in I))$. For a  structure $\relsys{A}$ the \emph{relational complexity $\rc(\relsys{A})$} of $\relsys{A}$ is the least $k$ such that $(A, (\rel{A}{i}:i\in I), \Inv_{\leq_k}(\relsys{A}))$ is ultrahomogeneous, if such a $k$ exist. If no such $k$ exists, we say that the relational complexity of $\relsys{A}$ is not finite and write $\rc(\relsys{A}) = \infty$.  

\section{Basic properties}
As a warmup, we state few basic observations about relational and lift complexities. 
It easily follows from the definition, that complement of ultrahomogeneous structure is also ultrahomogeneous. The same holds for both of our notions of complexity.
\begin{prop}
Relational and lift complexity is closed under complementation.
\end{prop}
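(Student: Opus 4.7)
The plan is to exploit the fact that a relational structure and its complement have exactly the same automorphism group and exactly the same partial isomorphisms between finite induced substructures, so that every notion built out of these objects transfers directly.

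First I would fix notation: for $\relsys{A}=(A,(\rel{A}{i}:i\in I))$, define the complement $\overline{\relsys{A}}=(A,(A^{\delta_i}\setminus \rel{A}{i}:i\in I))$. The key elementary observation is that a bijection $f:B\to C$ between subsets of $A$ satisfies $(x_1,\ldots,x_{\delta_i})\in\rel{A}{i}\iff(f(x_1),\ldots,f(x_{\delta_i}))\in\rel{A}{i}$ if and only if the same equivalence holds with $\rel{A}{i}$ replaced by $A^{\delta_i}\setminus\rel{A}{i}$. Hence the partial isomorphisms (and in particular the automorphisms) of $\relsys{A}$ and of $\overline{\relsys{A}}$ coincide. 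In particular $\Aut(\relsys{A})=\Aut(\overline{\relsys{A}})$, so $\Inv_k(\relsys{A})=\Inv_k(\overline{\relsys{A}})$ for every $k$.

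For lift complexity, suppose $\relsys{X}=(\relsys{A},\ext{X}{1},\ldots,\ext{X}{N})$ is an ultrahomogeneous lift of $\relsys{A}$ with all $\ext{X}{j}$ of arity at most $k$. I would form the lift $\overline{\relsys{X}}=(\overline{\relsys{A}},\ext{X}{1},\ldots,\ext{X}{N})$ of $\overline{\relsys{A}}$ by replacing only the shadow relations by their complements. By the observation above, the partial isomorphisms of $\relsys{X}$ and $\overline{\relsys{X}}$ are identical, and so are their automorphisms; therefore $\overline{\relsys{X}}$ is an ultrahomogeneous lift of $\overline{\relsys{A}}$ of the same maximum lift arity. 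This gives $\lc(\overline{\relsys{A}})\leq\lc(\relsys{A})$, and symmetry (since $\overline{\overline{\relsys{A}}}=\relsys{A}$) yields equality.

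For relational complexity the argument is essentially the same. Because $\Inv_{\leq k}(\relsys{A})=\Inv_{\leq k}(\overline{\relsys{A}})$, the lift $(\overline{\relsys{A}},\Inv_{\leq k}(\overline{\relsys{A}}))$ differs from $(\relsys{A},\Inv_{\leq k}(\relsys{A}))$ only by complementing the shadow relations; by the same partial-isomorphism principle, one is ultrahomogeneous iff the other is. Therefore $\rc(\overline{\relsys{A}})=\rc(\relsys{A})$. There is no real obstacle here; the only point that requires a moment's care is the verification that complementing a single relation does not change the notion of partial isomorphism, which is immediate from the definition.
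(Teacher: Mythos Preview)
Your argument is correct and is exactly the justification the paper has in mind: the proposition is stated there without proof, preceded only by the remark that ``it easily follows from the definition, that complement of ultrahomogeneous structure is also ultrahomogeneous.'' You have simply written out those easy details---that complementing relations leaves the set of partial isomorphisms (hence $\Aut$, hence $\Inv_{\leq k}$) unchanged---which is precisely the intended approach.
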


The relational complexity is determined similarly to a lift complexity moreover considering further restrictions on adding relations given by automorphism group. This immediately leads to the following simple fact.
\begin{prop}
For every structure $\relsys{A}$, $\lc(\relsys{A})\leq \rc(\relsys{A})$.
\end{prop}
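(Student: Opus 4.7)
The plan is essentially a one-line unpacking of the two definitions: the ultrahomogenization used to define $\rc(\relsys{A})$ is itself a valid witness for $\lc(\relsys{A})$, with the same bound on arities. The main observation to record is that the family of invariants of bounded arity is a legal family of extended relations in the sense of Section~2, so no new construction is needed.

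First I would handle the trivial case: if $\rc(\relsys{A}) = \infty$ there is nothing to prove. Otherwise, set $k = \rc(\relsys{A})$ and consider the lift
\[
\relsys{X} \;=\; \bigl(A,\;(\rel{A}{i}:i\in I),\;\Inv_{\leq k}(\relsys{A})\bigr),
\]
where the relations in $\Inv_{\leq k}(\relsys{A})$ are indexed in some way so that each appears as one of the extended relations $\ext{X}{j}$. By construction every such $\ext{X}{j}$ has arity at most $k$, so $\relsys{X}$ is a lift of $\relsys{A}$ using only relations of arity at most $k$. By the very definition of $\rc(\relsys{A})$, the structure $\relsys{X}$ is ultrahomogeneous. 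Hence $\relsys{X}$ is a witness for $\lc(\relsys{A}) \leq k = \rc(\relsys{A})$.

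The only mild point to check is that the definition of lift complexity permits an infinite family of additional relations; this is exactly how lifts are set up at the start of Section~2, where $\Rel(\Delta')$ is allowed to have arbitrary (countable) index set $I'$. No other subtlety arises: passing from the relational to the lift version only relaxes the requirement that the new predicates be $\Aut(\relsys{A})$-invariants, so the inequality is in fact a direct consequence of the fact that every ultrahomogenization by invariants is in particular a homogenizing lift.
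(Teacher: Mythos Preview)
Your argument is correct and is exactly the approach the paper has in mind: the paper does not even give a formal proof, stating only that relational complexity is lift complexity with the extra restriction that the new relations be $\Aut(\relsys{A})$-invariant, so the inequality is immediate. Your only added care---checking that the possibly infinite family $\Inv_{\leq k}(\relsys{A})$ is admissible as a lift---is consistent with the paper's setup (it explicitly allows lifts ``possibly of infinite type'' just above the definition of $\rc$), so nothing further is needed.
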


The relational complexity is interesting even for finite structures, while the lift complexity is trivially $1$ for finite structures that are not ultrahomogeneous.
\begin{prop}
\label{prop:upfinite}
Let $\relsys{A}$ be finite relational structure. Then $\lc(\relsys{A})\leq 1$ and $\rc(\relsys{A})\leq |A|-1$.
\end{prop}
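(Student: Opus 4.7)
I would prove the two inequalities independently, since they rely on quite different constructions.

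For $\lc(\relsys{A})\leq 1$, the plan is to exhibit a monadic lift that is trivially ultrahomogeneous. Enumerate $A=\{a_1,\dots,a_n\}$ and introduce $n$ unary relations $\ext{X}{i}=\{a_i\}$. In the resulting lift $\relsys{X}$ every vertex carries a unique color, so any isomorphism between induced substructures of $\relsys{X}$ must fix each vertex of its domain pointwise; it is therefore the identity on its domain, and extends to the identity automorphism of $\relsys{X}$. Only unary relations have been added, so the bound on $\lc$ follows.

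For $\rc(\relsys{A})\leq |A|-1$, set $k=|A|-1$ and $\relsys{X}=(A,(\rel{A}{i}:i\in I),\Inv_{\leq k}(\relsys{A}))$; I need to check that $\relsys{X}$ is ultrahomogeneous. Let $f\colon \relsys{B}\to \relsys{C}$ be an isomorphism between induced substructures of $\relsys{X}$, and split on $m=|B|$. If $m\leq k$, fix an enumeration $(b_1,\dots,b_m)$ of $B$ and consider the orbit $\rho=\{(\alpha(b_1),\dots,\alpha(b_m)):\alpha\in\Aut(\relsys{A})\}$ under the diagonal action of $\Aut(\relsys{A})$ on $A^m$. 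By construction $\rho\in\Inv_m(\relsys{A})\subseteq \Inv_{\leq k}(\relsys{A})$, hence $\rho$ is a relation of $\relsys{X}$. Because $f$ preserves all relations of $\relsys{X}$, the tuple $(f(b_1),\dots,f(b_m))$ also lies in $\rho$, which yields some $\alpha\in\Aut(\relsys{A})$ with $\alpha(b_i)=f(b_i)$ for each $i$. Such an $\alpha$ preserves every invariant of $\Aut(\relsys{A})$ by definition, so $\alpha\in\Aut(\relsys{X})$ and extends $f$. If instead $m=|A|$, then $B=C=A$, and $f$ is already an automorphism of $\relsys{X}$ and extends trivially.

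The only subtle point is the case split by domain size: the orbit trick produces an extension only when the domain has at most $k$ elements, and the case $|B|=|A|$ has to be handled separately by observing that the ``isomorphism'' is an automorphism of the whole lift. This is precisely why the bound is $|A|-1$ rather than $|A|$. A convenient byproduct of the argument is that one does not really need all of $\Inv_{\leq k}(\relsys{A})$; finitely many orbit relations (one per orbit of tuples of length $\leq k$) already suffice.
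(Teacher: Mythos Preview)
Your proposal is correct and follows essentially the same approach as the paper. For $\lc(\relsys{A})\leq 1$ both you and the paper colour each vertex uniquely by a fresh unary relation, and for $\rc(\relsys{A})\leq |A|-1$ both arguments rest on the observation that invariant relations of arity up to $|A|-1$ already force any partial isomorphism on a proper subset to lie in a single $\Aut(\relsys{A})$-orbit, while a partial isomorphism on all of $A$ is already a global automorphism; your write-up simply makes the orbit step explicit where the paper leaves it as ``easily seen.''
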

\begin{proof}
For every finite $\relsys{A}$ an ultrahomogeneous lift can be created by adding unique
unary relation to every vertex. This give $\lc(\relsys{A})\leq 1$.

The unary relations may not be invariant.  It can be however easily seen that
by adding all invariant relations (i.e. those having arity at most $|A|$) one
always obtain an ultrahomogeneous structure. The relational structure of arity
$|A|$ do not however contribute into the homogenization of the structure giving
the bound of $|A|-1$ on the relational complexity of finite structure.
\end{proof}


The following observation 
allows us to restrict our attention to connected structures.

\begin{prop}
\label{prop:disjoint}
Let $k \geq 2$ be finite and let $\relsys{A}$ be non-ultrahomogeneous relational structure with connected components  $\relsys{A}_1, \relsys{A}_2, \ldots ,\relsys{A}_k$. Then 
\begin{enumerate}
\item \label{item1} $\lc(\relsys{A}) = \max\{1,\lc(\relsys{A}_1),\lc(\relsys{A}_2),\ldots \lc(\relsys{A}_k)\};$
\item 
if there is a pair of two mutually isomorphic structures $\relsys{A}_i$ and $\relsys{A}_j$, $i\neq j$, such that $rc(\relsys{A}_i + \relsys{A}_j)>1$, then
$$\rc(\relsys{A}) = \max\{2,\rc(\relsys{A}_1),\rc(\relsys{A}_2),\ldots \rc(\relsys{A}_k)\},$$
otherwise
$$\rc(\relsys{A}) = \max\{1,\rc(\relsys{A}_1),\rc(\relsys{A}_2),\ldots \rc(\relsys{A}_k)\}.$$
\end{enumerate}
\end{prop}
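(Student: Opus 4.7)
The plan is to establish both items by component-wise arguments grouped by isomorphism type. For the lower bounds I use the observation that for any ultrahomogeneous lift (or invariant expansion) $\relsys{Y}$ of $\relsys{A}$ and any component $\relsys{A}_l$, the restriction $\relsys{Y}|_{A_l}$ is an ultrahomogeneous lift of $\relsys{A}_l$ of the same maximal arity. This holds because automorphisms of $\relsys{Y}$ permute the connected components of $\relsys{A}$, so any extension of a partial isomorphism with nonempty domain in $A_l$ must stabilize $A_l$ setwise. This at once yields $\lc(\relsys{A})\geq\lc(\relsys{A}_l)$ and $\rc(\relsys{A})\geq\rc(\relsys{A}_l)$ for each $l$; combined with $\lc(\relsys{A})\geq 1$ (since $\relsys{A}$ is not ultrahomogeneous) it handles the lower bound of (1). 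For the bound $\rc(\relsys{A})\geq 2$ under the hypothesis of (2), I take a witnessing partial isomorphism $f:B\to C$ in $\relsys{A}_i+\relsys{A}_j$ that preserves $\Inv_1(\relsys{A}_i+\relsys{A}_j)$ but does not extend. Since $\Inv_1(\relsys{A})|_{A_i\cup A_j}\subseteq\Inv_1(\relsys{A}_i+\relsys{A}_j)$, $f$ also preserves $\Inv_1(\relsys{A})$; a case analysis on which of $B\cap A_i$, $B\cap A_j$ is nonempty shows that any $\alpha\in\Aut(\relsys{A})$ extending $f$ can, after adjusting the action on inactive components through a chosen isomorphism into $\{A_i,A_j\}$, be converted into an automorphism of $\relsys{A}_i+\relsys{A}_j$ extending $f$, giving the desired contradiction.

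For the upper bounds I again work by isomorphism type. In (1) I pick, for each type $T$, an ultrahomogeneous lift $\relsys{X}_T$ of $\relsys{A}_T$ of arity at most $\lc(\relsys{A}_T)$, copy it onto every component of type $T$ via a fixed isomorphism, and add one unary relation marking each type. A partial isomorphism of the resulting lift preserving these markers respects types, decomposes into partial isomorphisms on the $\relsys{X}_T$-copies (each extending inside its component by ultrahomogeneity) and a partial type-preserving permutation of components, and these assemble into an automorphism. For (2) under the hypothesis I set $r=\max\{2,\rc(\relsys{A}_1),\ldots,\rc(\relsys{A}_k)\}$; then $\Inv_{\leq r}(\relsys{A})$ contains the binary ``same connected component'' relation as well as the unary type-markers. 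A partial isomorphism $f$ preserving $\Inv_{\leq r}(\relsys{A})$ sends each $B\cap A_l$ entirely into some $A_{\pi(l)}\cong A_l$ with $\pi$ injective on indices hit by $B$. Invoking the identification $\Inv_{\leq r}(\relsys{A})|_{A_l}=\Inv_{\leq r}(\relsys{A}_l)$ (the nontrivial direction obtained by spreading any invariant of $\relsys{A}_l$ through fixed isomorphisms to all components of type $T$), each $\phi_l^{-1}\circ f|_{B\cap A_l}$ extends to $\sigma_l\in\Aut(\relsys{A}_l)$; completing $\pi$ to a type-preserving permutation of all component indices and setting $\alpha|_{A_l}=\phi_l\circ\sigma_l$ produces the required automorphism of $\relsys{A}$.

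The else case of (2) is the step I expect to be the main obstacle. The failure of the hypothesis means $\rc(\relsys{A}_T+\relsys{A}_T)\leq 1$ for every type $T$ of multiplicity at least $2$. I will argue this forces $\relsys{A}_T$ to contain no ``totally unrelated'' pair, i.e., distinct vertices $b_1,b_2$ such that no tuple of any relation of $\relsys{A}_T$ uses both: such a pair yields a splitting partial isomorphism of $\relsys{A}_T+\relsys{A}_T$ (sending $b_1$ into the first copy and $b_2$ into the second, matched on unary orbits) that preserves $\Inv_1$ but cannot extend because every automorphism of $\relsys{A}_T+\relsys{A}_T$ permutes its two components as blocks, contradicting the hypothesis. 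The same absence of totally unrelated pairs persists in $n_T\relsys{A}_T$ and rules out component-splitting by any partial isomorphism preserving $\Inv_1$, so the component-wise assembly from the previous paragraph goes through using only unary invariants and yields $\rc(\relsys{A})\leq\max\{1,\rc(\relsys{A}_1),\ldots,\rc(\relsys{A}_k)\}$. The hardest parts will be keeping the invariant-restriction identity $\Inv_{\leq r}(\relsys{A})|_{A_l}=\Inv_{\leq r}(\relsys{A}_l)$ honest in the presence of nontrivial permutations of components by $\Aut(\relsys{A})$, and applying the ``totally unrelated pair'' characterization uniformly across the multiplicities $n_T$.
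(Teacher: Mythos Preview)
Your proposal is correct and considerably more detailed than the paper's own argument. The high-level construction for the upper bounds coincides with the paper's: unary relations marking the isomorphism type of each component, and, under the hypothesis of (2), an invariant binary relation recording ``same component'' (the paper also adds its complement on pairs within a type). Where you genuinely go beyond the paper is in two places. First, the lower bounds: the paper does not argue $\lc(\relsys{A})\geq\lc(\relsys{A}_l)$, $\rc(\relsys{A})\geq\rc(\relsys{A}_l)$, or $\rc(\relsys{A})\geq 2$ under the hypothesis of (2) at all; your restriction-to-a-component argument and the case analysis converting an extension $\alpha\in\Aut(\relsys{A})$ of a witnessing $f$ into an automorphism of $\relsys{A}_i+\relsys{A}_j$ are the right way to do this, and they are absent from the paper.

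Second, and more substantively, the else branch of (2). The paper simply asserts that when no isomorphic pair satisfies $\rc(\relsys{A}_i+\relsys{A}_j)>1$, adding the unary type-markers together with the invariant relations homogenizing each component already yields an ultrahomogeneous lift. It does not explain why a partial isomorphism of that lift cannot send two vertices of one component into two distinct components of the same type. Your ``totally unrelated pair'' argument is precisely the missing step: the hypothesis $\rc(\relsys{A}_T+\relsys{A}_T)\leq 1$ forces every two-element subset of $\relsys{A}_T$ to support a relation tuple using both vertices, and this structurally prevents any component-splitting by a partial isomorphism, across all multiplicities $n_T$. The invariant-restriction identity $\Inv_{\leq r}(\relsys{A})|_{A_l}=\Inv_{\leq r}(\relsys{A}_l)$, which you obtain by spreading an invariant of $\relsys{A}_l$ through fixed isomorphisms to all components of its type, is likewise a point the paper passes over. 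In short, you follow the paper's outline for the constructions but supply the verifications it omits; your treatment of the else case is a genuine and necessary addition rather than a mere elaboration.
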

(Here $\relsys{A} + \relsys{B}$ denote the disjoint union of $\relsys{A}$ and $\relsys{B}$.)
\begin{proof}
The ultrahomogeneous lift of the structure $\relsys{A}$ can always be created as a disjoint
union of ultrahomogeneous lifts of $\relsys{A}_1, \relsys{A}_2, \ldots ,\relsys{A}_k$ with $k$
additional unary relations distinguishing individual components. With this construction
the lift complexity is increased to at least $1$. This finishes proof of (1).

To show (2) we need to add only invariant unary and binary relations depending on isomorphism type of components. First we add unary relations
classifying vertices by an isomorphism type of a connected component they belong to.
If there is no pair of mutually isomorphic components $\relsys{A}_i,\relsys{A}_j$ such
that $\rc(\relsys{A}_i + \relsys{A}_j)>1$, then the lift is extended by all necessary relations that make each component ultrahomogeneous which 
creates an ultrahomogeneous lift. 

For every $\relsys{A}_i$ along with one or more components $\relsys{A}_{j_1}$, $\relsys{A}_{j_2}$,
\ldots $\relsys{A}_{j_n}$ that are all distinct and isomorphic to $\relsys{A}_i$, we add
two extra binary relations; first relation used to join
all pairs of vertices within the same component, while second joins all pairs
of vertices within two different components.

These additional relations prevents partial isomorphisms exchanging vertices in between 
individual connected components leading to an ultrahomogeneous structure after applying all remaining relations like above.
\end{proof}

\subsection{Graphs of complexity 1} 
Our notion of relational complexity of structures is derived from the notion of relational
complexity of groups in \cite{Cherlin:2000}.   For every structure
$\relsys{A}$, the relational complexity of $\Aut(\relsys{A})$ (in sense of \cite{Cherlin:2000}) corresponds to 
$\rc(\relsys{A})$ with the exception of $\rc(\relsys{A})$ being smaller than
the maximal arity of relation in $\relsys{A}$. Our notion of relational
complexity ignore arities of relations already present in $\relsys{A}$. For
instance, the relational complexity of cyclic group on $n\geq 4$ elements in the sense of
\cite{Cherlin:2000} is 2, while $\rc(C_4)$ and $\rc(C_5)$ is 0.
The main motivation for our definition is to get
a finer information on structures with small complexity. We explore these small
complexity classes now.
To simplify our presentation, we will restrict our attention to graphs. 
(Many of our observations generalize into relational structures.)

The class of graphs of relational or lift complexity $0$ is well studied; those
are the ultrahomogeneous graphs. We now consider the class of graphs of relational complexity $1$ and the
class of graphs with lift complexity $1$. 
Both these clases are closely related to an established notion
of $n$-graph (see \cite{Rose:2011} for a recent review of the topic).

\begin{definition}
For $n$ a positive integer, an {\em $c$-colored $n$-graph} is a graph on $n$
pairwise disjoint sets of vertices $V_1, V_2, \ldots, V_n$ (called {\em parts})
each of which is an ordinary countable graph, with $c$ edge-types between pairs
of parts (cross edges).
\end{definition}
Isomorphisms of $n$-graphs do not permit exchanging vertices of individual parts~\cite{Rose:2011}.
We will consider $2$-colored $n$-graphs to correspond to graphs where one of
the types of cross edges is an edge and the other type is non-edge.

The unary relations forming an ultrahomogeneous lift (equivalently
seen as a vertex coloring) splits the vertex set of a graph into a finite
partition. In analogy with $n$-graphs  we call classes of this partition {\em parts}. 

The following observation describe the structure of graphs of complexity 1.

\begin{prop}\label{prop:rc1} Let $\relsys{G}$ be a graph with $\rc(\relsys{G}) = 1$ or $\lc(\relsys{G})=1$ and let $V_1,V_2,\ldots, V_k$ be its parts. Then the following holds.
\begin{enumerate}
\item The subgraph induced by each part is an ultrahomogeneous graph.
\item $\relsys{G}$ corresponds to an ultrahomogeneous $2$-colored $k$-graph with partitions $V_1,V_2,\ldots, V_k$.
\item The subgraph induced by each pair of parts corresponds to an ultrahomogeneous $2$-colored $2$-graph.
\end{enumerate}
\end{prop}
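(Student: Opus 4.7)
The plan is to treat both hypotheses uniformly. In either case we obtain an ultrahomogeneous lift $\relsys{X}$ of $\relsys{G}$ whose extended relations are all unary: for $\lc(\relsys{G})=1$ this is the definition, while for $\rc(\relsys{G})=1$ we take the lift $(\relsys{G},\Inv_{\leq 1}(\relsys{G}))$. The parts $V_1,\ldots,V_k$ are then precisely the color classes of this unary lift, and the key observation is that every automorphism of $\relsys{X}$ must preserve the unary relations and therefore setwise fix each $V_i$. This single observation should reduce all three claims to a routine unfolding of definitions.

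For (1), I would take an arbitrary isomorphism $\phi\colon H_1\to H_2$ between two finite induced subgraphs lying entirely in one part $V_i$. Because $H_1,H_2\subseteq V_i$, the map $\phi$ automatically preserves the unary color, so it is an isomorphism of the corresponding induced substructures in $\relsys{X}$. Ultrahomogeneity of $\relsys{X}$ extends $\phi$ to an automorphism $\alpha$ of $\relsys{X}$, and by the observation $\alpha(V_i)=V_i$; hence $\alpha|_{V_i}$ is an automorphism of the induced graph on $V_i$ extending $\phi$.

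For (2), I would view $\relsys{G}$ together with its partition as a $2$-colored $k$-graph. An isomorphism between finite induced sub-$k$-graphs by definition preserves parts and preserves edge/non-edge on every pair, hence preserves every relation of $\relsys{X}$. It therefore extends to an automorphism of $\relsys{X}$, which setwise preserves each $V_i$ and is thus an automorphism of the $k$-graph. Claim (3) then falls out as the special case of (2) applied to the sub-$k$-graph induced on $V_i\cup V_j$: the extension from (2) preserves each part, so its restriction to $V_i\cup V_j$ is the required automorphism of the induced $2$-colored $2$-graph.

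The main obstacle is purely definitional and bookkeeping: one must carefully reconcile three notions of isomorphism (of graphs on a single part, of $n$-graphs, and of lifts) and verify that in each setting ``preserving the relevant partition together with edges'' matches ``preserving the relations of $\relsys{X}$''. Once that correspondence is made explicit, the extension step is identical in all three cases and proceeds directly from the ultrahomogeneity of $\relsys{X}$.
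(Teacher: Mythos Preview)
Your proposal is correct and follows the same approach as the paper: pass to the ultrahomogeneous monadic lift $\relsys{X}$, observe that its automorphisms setwise fix each part, and then extend partial isomorphisms via the ultrahomogeneity of $\relsys{X}$ and restrict back to the relevant part(s). The paper's own proof is much terser---it handles (1) in one sentence and dismisses (2) and (3) as ``directly from ultrahomogeneity''---but the underlying mechanism is identical to yours, and your more explicit unfolding of the three cases is entirely in line with what the paper leaves implicit.
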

\begin{proof}
The automorphisms of $\relsys{G}$ consists of arbitrary combinations of automorphisms of
subgraphs induced by the individual parts, consequently, each of
the subgraphs must already be ultrahomogeneous giving (1).

(2) and (3) follows directly from the ultrahomogeneity.
\end{proof}

The structural condition given by Proposition \ref{prop:rc1} and the
classification of ultrahomogeneous graphs allows construction of number of
interesting examples. Generally $c$-colored $n$-graphs are only partially classified. The classification is complete for the class of {\em $n$-edge-colored
bipartite} graphs that is a special subclass of an $n$-colored $2$-graphs where every partition is an independent set.

\begin{theorem}[\cite{JenkinsonSeidelTruss:2012}]
If $\relsys{G}$ is a countable ultrahomogeneous $n$-edge-colored bipartite graph such that
$n$ is finite and all of the $n$ types of edges are used in $\relsys{G}$, then one of
the following holds:
\begin{itemize}
  \item $n=1$ and all edges are the same color;
  \item $n=2$ and edges of one color are a perfect matching, and edges of the other color are its complement;
  \item $n\geq 2$ and $\relsys{G}$ is a generic bipartite $n$-edge-colored graph.
\end{itemize}
\end{theorem}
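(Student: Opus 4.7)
The approach is \Fraisse-theoretic: a countable structure is ultrahomogeneous if and only if it is the \Fraisse limit of an amalgamation class of finite structures, so the task reduces to classifying which ages of finite bipartite $n$-edge-colored graphs are amalgamation classes. As a setup, note that the parts $V_1, V_2$ are setwise invariant under $\Aut(\relsys{G})$, and that ultrahomogeneity forces a uniform degree profile: for each $u \in V_1$ and each color $c$, the cardinality $d_c(u) := |N_c(u)|$ depends only on the part of $u$, and analogously for $V_2$. Since every color is used and $\relsys{G}$ is countable, each such degree lies in $\NN \cup \{\aleph_0\}$.

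If every $d_c(u) = \aleph_0$ on both sides, a standard back-and-forth argument shows $\relsys{G}$ is the \Fraisse generic bipartite $n$-edge-colored graph: given a finite partial isomorphism and a new vertex to embed, infinitely many neighbors of each color on each side let one realize any prescribed color pattern, so every finite bipartite $n$-edge-colored configuration arises as an induced substructure.

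Otherwise some color $c$ has finite degree $k \geq 1$ from $V_1$. I would argue that $k = 1$, so that $c$-edges form a perfect matching. For $k \geq 2$ one exhibits an amalgamation obstruction by taking the configuration $(u;v_1,v_2)$ with $v_1, v_2$ distinct $c$-neighbors of $u$ and amalgamating two such configurations over $\{u,v_1\}$ with incompatible further $c$-neighborhoods, contradicting ultrahomogeneity. With $c$ a perfect matching, the case $n=2$ is immediate since the unique remaining color must fill all non-matched pairs. For $n \geq 3$, I would then rule out the sparse case entirely: the matched partners $u', v'$ of a non-matching edge $uv$ of color $c' \neq c$ form a colored $4$-cycle whose type must be automorphism-invariant, and amalgamating two such configurations with differing color choices on the remaining cross edges produces a contradiction with ultrahomogeneity; hence only the generic possibility survives for $n \geq 3$.

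The main obstacle is this last step: exhibiting the correct amalgamation failure that rules out a sparse color for $n \geq 3$ but permits it for $n = 2$. Intuitively, when $n = 2$ the second color is forced on all non-matched pairs and no ambiguity arises, whereas for $n \geq 3$ the genuine choice among the remaining $n - 1 \geq 2$ colors conflicts with the rigidity the matching imposes; making this conflict precise will require a careful small-amalgamation analysis on the colored $4$-cycle.
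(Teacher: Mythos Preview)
The paper does not prove this theorem. It is quoted verbatim from \cite{JenkinsonSeidelTruss:2012} and stated without proof, purely as background for the discussion of graphs of complexity~1. There is therefore no ``paper's own proof'' to compare your proposal against.

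That said, your outline is the natural \Fraisse-theoretic approach and is broadly in the spirit of the cited classification: reduce to analyzing which ages of finite bipartite $n$-edge-colored graphs form amalgamation classes, split on whether all color-degrees are infinite (giving the generic via back-and-forth) or some color is sparse, and then pin down the sparse case. Two places in your sketch are genuinely incomplete rather than merely terse. First, the step ``$k\geq 2$ gives an amalgamation obstruction over $\{u,v_1\}$'' is not yet an argument: you have not said what the two incompatible extensions are, nor why no colored bipartite structure amalgamates them; as stated one could simply take a vertex with more than $k$ $c$-neighbours, but that is a degree contradiction, not an amalgamation failure, and you should decide which mechanism you are actually invoking. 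Second, you correctly flag the $n\geq 3$ elimination as the crux, but the ``colored $4$-cycle'' heuristic needs to be made into a concrete pair of finite structures with a common substructure and no common extension in the age; the hand-wave about ``differing color choices on the remaining cross edges'' does not yet name those structures. Both gaps are fillable along the lines you indicate, but as written the proposal is a plan rather than a proof.
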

\begin{figure}[ht!]
\centerline{\includegraphics[width=6cm]{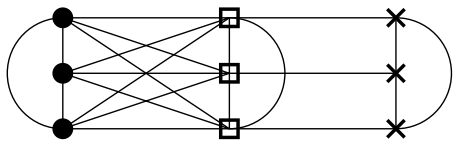}}
\caption{Graph of relational complexity 1 consisting of 3 isomorphic ultrahomogeneous graph ($K_3$). The parts
are depicted by different vertex markings.}
\label{fig:rc1-a}
\end{figure}

In the case of graphs ($n\leq 2$), we thus have only three types of bipartite ultrahomogeneous graphs (up to complements); complete or empty bipartite graphs,
matchings and the generic bipartite graph.
In Figure \ref{fig:rc1-a} we show a graph of relational complexity 1 that consist of 3 isomorphic ultrahomogeneous subgraphs.

Observe that not every graph built from finitely many of isomorphic copies of ultrahomogeneous graphs necessarily have relational complexity $1$. In the example given it is the use of different bipartite graphs to connect each pair of subgraphs ensuring the fact
that each of the subgraphs forms an independent cluster. Proposition \ref{prop:rc1} can however
be reversed for lift complexity:
\begin{prop}
Every infinite graph $G$ with $n$ parts ($n$ finite) such that $\lc(G)=1$ correspond to a ultrahomogeneous $2$-colored $n$-graph.
\end{prop}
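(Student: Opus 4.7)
The plan is to translate the ultrahomogeneity of the monadic lift directly into the $n$-graph language. Fix an ultrahomogeneous lift $\relsys{G}^+=(\relsys{G},\ext{X}{1},\ldots,\ext{X}{N})$ witnessing $\lc(\relsys{G})=1$; by definition each $\ext{X}{i}$ is unary. The parts $V_1,\ldots,V_n$ of $\relsys{G}$ are precisely the classes of vertices sharing the same color, where the color of $v$ is the set $\{i:(v)\in\ext{X}{i}\}$. I will build an $n$-graph $\relsys{H}$ whose underlying graph is $\relsys{G}$, whose distinguished parts are $V_1,\ldots,V_n$, and whose two cross-edge colors are \emph{edge} and \emph{non-edge}; by construction $\relsys{H}$ is a $2$-colored $n$-graph.

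Next I would verify that the partial isomorphisms of $\relsys{H}$ (as an $n$-graph) are exactly the partial isomorphisms of $\relsys{G}^+$ (as a relational structure). In one direction, any isomorphism $f$ between induced substructures of $\relsys{G}^+$ must preserve every unary relation, hence must send each part $V_i$ into itself; since $f$ also preserves the edge relation of $\relsys{G}$, it is a partial $n$-graph isomorphism of $\relsys{H}$. In the other direction, a partial $n$-graph isomorphism of $\relsys{H}$ preserves the parts by definition, and hence preserves the unary relations of $\relsys{G}^+$, so it is a partial isomorphism of $\relsys{G}^+$.

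With this identification, ultrahomogeneity transfers for free: every finite partial isomorphism of $\relsys{H}$ is a partial isomorphism of $\relsys{G}^+$, and by ultrahomogeneity of $\relsys{G}^+$ it extends to an automorphism of $\relsys{G}^+$, which by the same dictionary is an automorphism of $\relsys{H}$. This shows $\relsys{H}$ is an ultrahomogeneous $2$-colored $n$-graph, which is the desired correspondence.

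There is essentially no deep obstacle here; the entire content is an unravelling of the definitions of lift, monadic lift, parts, and $n$-graph isomorphism, together with the evident observation that unary relations cannot distinguish two vertices of the same color. The only place to be slightly careful is to note that the hypothesis $n$ finite makes the resulting $n$-graph well-defined in the sense of the definition above, and that the infinite hypothesis on $\relsys{G}$ is only used to exclude the degenerate finite case of Proposition~\ref{prop:upfinite}, where one can trivially attain $\lc=1$ by coloring each vertex uniquely and the resulting ``correspondence'' is vacuous.
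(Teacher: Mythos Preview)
Your argument is correct and is exactly the natural unfolding of the definitions; the paper in fact offers no separate proof of this proposition, and the closest it comes is the one-line justification of item~(2) in Proposition~\ref{prop:rc1} (``follows directly from the ultrahomogeneity''), which your write-up makes explicit. The identification of partial isomorphisms of the monadic lift with partial $n$-graph isomorphisms is the whole point, and you have it right.

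One remark on scope rather than correctness: the sentence introducing the proposition (``Proposition~\ref{prop:rc1} can however be \emph{reversed} for lift complexity'') indicates that the authors really have a two-way correspondence in mind---i.e.\ that conversely every ultrahomogeneous $2$-colored $n$-graph arises as the monadic lift of a graph with $\lc\le 1$. You prove only the direction literally stated. The other direction is equally immediate (take the $n$ parts as the new unary relations; then automorphisms and partial isomorphisms of the $n$-graph coincide with those of the lift by the same dictionary you already set up), so you may wish to add a sentence to that effect to match the intended content.
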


\begin{figure}[ht!]
\centerline{\includegraphics[width=6cm]{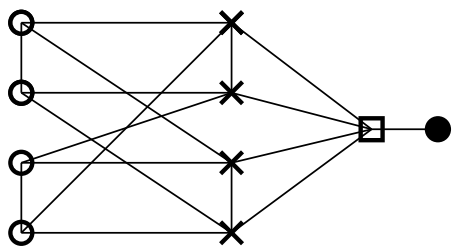}}
\caption{Graph of relational complexity 1 using a sporadic $2$-colored $2$-graph.}
\label{fig:rc1-b}
\end{figure}
The classification of ultrahomogeneous $2$-colored $n$-graphs is still an open problem, see
\cite{Rose:2011} for partial results.  The graph depicted in Figure \ref{fig:rc1-b} shows a graph of
relational complexity $1$ that is constructed from 4 ultrahomogeneous subgraphs
($K_1$, $K_1$, $K_2 + K_2$ and $K_2 + K_2$) using an sporadic example
of ultrahomogeneous 2-colored 2-graph joining $K_2 + K_2$ and $K_2 + K_2$
(c.f. \cite{Rose:2011}).

A special case of graphs of lift complexity 1 can be constructed from a given
finite graph $\relsys{G}$ by replacing every vertex by an ultrahomogeneous
graph and every edge by an ultrahomogeneous $2$-colored $2$-graph. Such
construction is considered in \cite{BubickaNesetril:toapear} where homomorphism
dual $\relsys{D}$ of graph $\relsys{T}$ is turned into a countable graph
$\relsys{U}$ that is embedding universal for the class of all graphs not
containing homomorphic image of $\relsys{T}$. Here vertices of $\relsys{D}$ are
replaced by infinite discrete graphs and edges of $\relsys{D}$ by random
bipartite graphs.  It is well known result \cite{NesetrilTardif:2000} that
homomorphism duals exists only when $\relsys{T}$ is an (relational) tree.  For
the case of universal graphs with lift complexity $1$, the construction can be
extended also for structures constructed from trees by replacing every edge by
an arbitrary irreducible structure.   When the same construction is applied on
any rigid graph  $\relsys{D}$ (and this is the case of all cores of graph
duals), the expanded graph has also relational complexity $1$.
We will further study complexity of infinite graphs in a greater detail in Section \ref{sec:infinite}.


%

\subsection{Graphs of complexity 2} 

Given the difficulties of characterizing even graphs of complexity
$1$, it is not really reasonable to expect simple characterization of graphs of
complexity $2$. We can however show several interesting classes of such
graphs. 


Given graph $\relsys{G}$, the {\em graph metric} is a function measuring path distance of
two vertices.  We call graph {\em metrically ultrahomogeneous} if and only if it is
ultrahomogeneous as a metric space with its graph metric.  It is not difficult to
see, that relations measuring distance of vertices (i.e.  relations connecting
vertices of distance $n$) are all invariant binary relations.  We immediately get
the following observation.
\begin{prop}
All metrically ultrahomogeneous graphs have relational and lift complexity at
most 2.
\end{prop}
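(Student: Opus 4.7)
The plan is to realize the graph metric as a collection of binary invariant relations and observe that metric ultrahomogeneity of the graph is precisely ultrahomogeneity of the resulting lift.

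First, for each positive integer $n$, define the binary relation $R_n \subseteq G \times G$ by $(u,v) \in R_n$ if and only if $d_{\relsys{G}}(u,v) = n$, where $d_{\relsys{G}}$ is the graph metric (taking $n=\infty$ if the graph is disconnected, which can be absorbed into an extra unary/binary relation or, equivalently, handled via Proposition~\ref{prop:disjoint}; so without loss of generality assume $\relsys{G}$ is connected). Since every $\alpha \in \Aut(\relsys{G})$ preserves adjacency, it preserves the graph metric, and hence $R_n \in \Inv_2(\relsys{G})$ for every $n$. In particular, all of these relations are already contained in $\Inv_{\leq 2}(\relsys{G})$, so they appear in the canonical lift used to define $\rc(\relsys{G})$.

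Next, consider the lift $\relsys{X} = (\relsys{G}, R_1, R_2, R_3, \ldots)$. I would show that $\relsys{X}$ is ultrahomogeneous. A finite induced substructure of $\relsys{X}$ on a vertex set $S$ records, for each pair $u,v \in S$, the exact value of $d_{\relsys{G}}(u,v)$ via the relations $R_n$. Consequently, any isomorphism $f\colon S \to S'$ of induced substructures of $\relsys{X}$ is exactly a bijection satisfying $d_{\relsys{G}}(f(u),f(v)) = d_{\relsys{G}}(u,v)$ for all $u,v \in S$, i.e. an isometry of finite metric subspaces. By metric ultrahomogeneity, every such isometry extends to a global isometry of $(G, d_{\relsys{G}})$, which (since adjacency is equivalent to distance $1$) is precisely an automorphism of $\relsys{G}$, and therefore also of $\relsys{X}$.

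Since all relations in $\relsys{X}$ have arity $2$, this shows $\lc(\relsys{G}) \leq 2$. The same witness lies inside $\Inv_{\leq 2}(\relsys{G})$, so the canonical relational lift is at least as rich as $\relsys{X}$ and remains ultrahomogeneous, giving $\rc(\relsys{G}) \leq 2$. The only minor obstacle is the book-keeping for disconnected graphs and for the case of a finite-diameter graph where only finitely many $R_n$ are needed; neither changes the arity bound, and the disconnected case reduces to the connected one via Proposition~\ref{prop:disjoint}.
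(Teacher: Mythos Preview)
Your argument is correct and follows exactly the approach the paper sketches in the sentence preceding the proposition: the distance relations $R_n$ are invariant binary relations, and metric ultrahomogeneity is precisely the statement that the resulting lift is ultrahomogeneous. Your write-up is simply a more detailed version of the paper's one-line justification, including the (harmless) bookkeeping for disconnected graphs and the observation that passing from $\relsys{X}$ to the full $\Inv_{\leq 2}$-lift cannot destroy ultrahomogeneity.
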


The metrically ultrahomogeneous graphs was studied by Cherlin leading to partial catalog
\cite{Cherlin:2011}. These include special bipartite graphs,
tree-like graphs, antipodal graphs and number of other examples.

On example of a connected graph of relational complexity 2 that is not metrically ultrahomogeneous can be constructed
by aid of Proposition \ref{prop:disjoint}. Take complement of graph created as
$C_5 + C_5$.  This graph has relational complexity 2, but it is not metrically
ultrahomogeneous.  Consider an function mapping edge within complement of $C_5$ into and
edge joining the two subgraphs.  Additional examples can be easily produced by
aid of the following two propositions.

\begin{prop}\label{prop:rc1-trees}
Finite (graph) trees have relational complexity at most $2$.
\end{prop}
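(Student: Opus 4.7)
The plan is to show that $(T, \Inv_{\leq 2}(T))$ is ultrahomogeneous: every partial isomorphism $\phi\colon U \to V$ between finite subsets of $V(T)$ that preserves all unary and binary invariants extends to an automorphism of $T$.

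The first step exploits the fact that the graph distance $d$ is itself a binary invariant of $\Aut(T)$. Preservation of binary invariants forces $d(u, u') = d(\phi(u), \phi(u'))$ for all $u, u' \in U$. In a tree, pairwise distances on $U$ determine the combinatorial structure of the convex hull (Steiner tree) $H(U)$, so $\phi$ extends uniquely to a tree-isomorphism $\bar\phi \colon H(U) \to H(V)$ sending $u_i$ to $\phi(u_i)$.

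The second step extends $\bar\phi$ to the rest of $T$. Decompose $T$ as $H(U)$ together with a multiset of rooted branches hanging off each vertex $w \in H(U)$, and similarly $T = H(V) \cup \{\text{branches}\}$. The aim is to match branches at each $w$ with branches at $\bar\phi(w)$ as multisets of rooted trees. To see that the multisets agree, note that any $w \in H(U)$ lies on some path from $u_i$ to $u_j$ with $u_i, u_j \in U$. The binary invariant for the pair $(u_i, u_j)$ records the $\Aut(T)$-orbit of the double-pointed tree $(T, u_i, u_j)$, and the rooted-branch multiset at $w$ (relative to the $u_i u_j$-path) is part of this invariant; hence it equals the multiset at $\bar\phi(w)$. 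Because $T$ is a tree, branches attached at distinct Steiner vertices are mutually independent, so matchings chosen independently at each $w$ glue with $\bar\phi$ into a well-defined automorphism of $T$.

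The main obstacle is the branch-matching argument: verifying that the rooted-branch multisets really are determined by the 2-type data at each Steiner vertex, and that compatible internal isomorphisms of matched branches can be chosen. The tree structure is essential both for the unique canonical extension of $\phi$ to $H(U)$ (uniqueness of paths) and for the independence of branch matchings across Steiner vertices (absence of cycles that would couple distant choices). Finiteness of $T$ is used only to guarantee that the branch multisets at any $w$ are finite, so that a bijective matching of equal multisets exists.
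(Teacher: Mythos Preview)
Your approach is correct in outline and genuinely different from the paper's. The paper argues by induction on the diameter, after strengthening the statement to vertex-coloured trees: it strips all leaves, encodes the isomorphism type of each removed leaf-cluster as a new colour on its parent, applies the inductive hypothesis to the pruned coloured tree $\relsys{T}'$, and then explicitly manufactures the extra invariant unary and binary relations on the leaves from those already obtained on $\relsys{T}'$. This is a constructive argument that exhibits the homogenizing lift. Your argument is semantic: you work directly with $\Aut(\relsys{T})$ and show that any partial map preserving $\Inv_{\le 2}(\relsys{T})$ extends to an automorphism, via the tree metric and Steiner hulls. Your route makes it transparent \emph{why} arity $2$ suffices (pairwise distances already pin down the hull), while the paper's route yields an explicit description of the relations and extends verbatim to coloured trees.

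One step needs tightening. In the branch-matching argument you invoke the $2$-type of a single pair $(u_i,u_j)$ and conclude that the branch multiset at $w$ ``relative to the $u_i u_j$-path'' matches that at $\bar\phi(w)$. But branches off this path may still meet $H(U)$ along other directions when $w$ has degree $\ge 3$ in $H(U)$, so this is not yet the multiset of branches lying entirely outside $H(U)$, which is what you must match before gluing. The repair is short: the $2$-type of $(u_i,u_j)$ actually gives an automorphism $\alpha$ with $\alpha(w)=\bar\phi(w)$, so the \emph{full} branch multisets at $w$ and $\bar\phi(w)$ coincide; then, varying the pair so that a given $H(U)$-neighbour $w'$ of $w$ lies on the $u_i$ side, one sees that the branch toward $w'$ is isomorphic (as a rooted tree) to the branch toward $\bar\phi(w')$. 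Subtracting these $H(U)$-directed branches from the full multiset yields the required equality of outside-$H(U)$ branch multisets, and your independence-and-gluing argument then goes through.
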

\begin{proof}
We prove this by induction on the diameter the tree $\relsys{T}$. Again we will consider
the unary relations to be vertex colorings.
To carry the induction, we prove a stronger result: all finite vertex colored trees have relational complexity at most $2$.

The claim trivially holds for colored tree consisting of a single vertex or an edge.

Now assume that the claim works for trees of diameter up to $k$.
 Fix colored tree $\relsys{T}$ of diameter $k+2$ and consider a tree $\relsys{T}'$ constructed from $\relsys{T}$ by
\begin{enumerate}
 \item removing all leaves; and
 \item changing color of every new leaf vertex $v$ to an unique representation of the isomorphism type of a rooted tree induced by $\relsys{T}$ on $v$ and its sons.
\end{enumerate}
$\relsys{T}'$ is is a vertex colored tree with diameter $k$. By the induction hypothesis $\rc(\relsys{T}')\leq 2$ and thus there exists a ultrahomogeneous lift $\relsys{X}'$ of $\relsys{T}'$ adding only invariant unary and binary relations. To make our presentation easier, we also consider the colors introduced in construction of $\relsys{T}'$ to be extended unary relations of $\relsys{X}'$ and edges of $\relsys{T}'$ to be mirrored in an extended binary relation of $\relsys{X}'$.

  We extend $\relsys{X}'$ to a ultrahomogeneous lift $\relsys{X}$ of $\relsys{T}$. Once again we extend our language (the type of $\relsys{X}$) in the following way:
\begin{enumerate}
\item for every unary relation $\ext{X'}{i}$ we add new unary relation $\ext{X}{u(i)}$ and new binary relation $\ext{X}{b(i)}$; and
\item for every binary relation $\ext{X'}{i}$ we add new binary relation $\ext{X}{b(i)}$.
\end{enumerate}
$\relsys{X}$ is a lift of $\relsys{T}$ such that $\ext{X}{i}=\ext{X'}{i}$ for all relations used by $\relsys{X}'$.  We use the newly introduced relations in the following way.
\begin{enumerate}
\item for every leaf $v\in T$ with father $v'$ such that $(v')\in \ext{X'}{i}$ we also put $(v)\in \ext{X}{u(i)}$;
\item for every pair of distinct leaves $v_1,v_2 \in T$ sharing father $v'$ such that $(v')\in \ext{X'}{i}$ we also put $(v_1,v_2)\in \ext{X}{b(i)}$ and $(v_2,v_1)\in \ext{X}{b(i)}$;
\item for every pair of distinct leaves $v_1,v_2 \in T$ with distinct fathers $v'_1,v'_2$ (respectively) such that $(v'_1,v'_2)\in \ext{X'}{i}$ we also put $(v_1,v_2)\in \ext{X}{b(i)}$.
\end{enumerate}
The ultrahomogeneity of $\relsys{X}'$ follows from the fact that automorphisms
of $\relsys{T}$ must map leaves to leaves and non-leaves to non-leaves.  The automorphism group of
$\relsys{T}$ acting on non-leaf vertices of $\relsys{T}$ is precisely the
automorphism group of $\relsys{X}'$. Finally every automorphism can map a leaf vertex
$v$ to a leaf vertex $v'$ if and only if they have same color and it can map the father of
vertex $v$ to the father of vertex $v'$.

\end{proof}

Because the graphs with relational complexity $2$ are closed under
complementation, the Proposition \ref{prop:disjoint} give an iterative way to
construct non-trivial examples of such graphs. Consider the following special case. {\em Cograph} is a graph not
containing an induced path on $4$ vertices. It is well known that all cographs
can be generated from the single-vertex graph $K_1$ by complementation and
disjoint union. We immediately get
\begin{prop}
\label{prop:cographs}
Finite cographs have relational complexity at most 2.
\end{prop}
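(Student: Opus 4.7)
The plan is to prove the proposition by structural induction on the cograph construction, leveraging the two closure properties already established: closure of relational complexity under complementation, and the behavior of $\rc$ on disjoint unions given by Proposition \ref{prop:disjoint}.

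Recall that the class of finite cographs is the smallest class containing $K_1$ and closed under disjoint union and complementation. So I would induct on the number of vertices (or equivalently on the height of a cotree representation of the cograph). The base case is $K_1$, which is ultrahomogeneous and therefore has $\rc(K_1)=0\le 2$. For the inductive step, let $\relsys{G}$ be a cograph on $n\ge 2$ vertices. Either $\relsys{G}$ is disconnected, in which case it decomposes as $\relsys{G}=\relsys{G}_1+\relsys{G}_2+\cdots+\relsys{G}_k$ into its connected components (each a cograph on fewer vertices), or $\relsys{G}$ is connected, in which case its complement $\overline{\relsys{G}}$ is a disconnected cograph on $n$ vertices, so $\overline{\relsys{G}}=\relsys{H}_1+\cdots+\relsys{H}_k$ decomposes into components that are themselves cographs on fewer vertices.

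In the disconnected case, the induction hypothesis gives $\rc(\relsys{G}_i)\le 2$ for each component. Proposition \ref{prop:disjoint}(2) then yields $\rc(\relsys{G})\le \max\{2,\rc(\relsys{G}_1),\ldots,\rc(\relsys{G}_k)\}=2$, which suffices (whichever of the two subcases in Proposition \ref{prop:disjoint}(2) applies, the stated bound is at most $2$ here). In the connected case, the same argument bounds $\rc(\overline{\relsys{G}})\le 2$, and then the proposition on closure under complementation (the first proposition of Section 3) gives $\rc(\relsys{G})=\rc(\overline{\relsys{G}})\le 2$.

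There is essentially no obstacle to carry out: the two black-box tools (complementation closure and the disjoint-union formula) together with the recursive definition of cographs make the induction almost automatic. The only mild subtlety worth mentioning is that one must apply Proposition \ref{prop:disjoint} to the \emph{connected components} of $\relsys{G}$ (or of $\overline{\relsys{G}}$) rather than to an arbitrary decomposition witnessing the cograph construction; since, however, every decomposition of a cograph into a disjoint union refines into its connected components (each still being a cograph, as cographs are closed under taking induced subgraphs), the induction hypothesis still applies to each component, and the argument goes through.
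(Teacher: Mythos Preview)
Your proposal is correct and follows exactly the argument the paper has in mind: the paper states the proposition as an immediate consequence of the recursive generation of cographs from $K_1$ by disjoint union and complementation, combined with Proposition~\ref{prop:disjoint} and closure under complementation, and you have simply written out that induction in full.
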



\section{Finite graphs with large relational complexity} 
\label{sec:large}
It is not difficult to construct examples of finite graphs with large
relational complexity showing that the relational complexity of graphs is not bounded. 

Consider a permutation group $\Gamma$ acting on $n$
elements that is $k$-transitive but not $(k+1)$-transitive.  (If $\Gamma$ is
the alternating group on $n$ elements, then $k=n-2$.)  Now construct graph $\relsys{G}_\Gamma$ with the following vertices:
\begin{enumerate}
\item $n$ control vertices $v_1,v_2,\ldots v_n$;
\item for every permutation $p\in \Gamma$, $n+1$ additional vertices $v^p_1,v^p_2,\ldots , v^p_{n+1}$;
\end{enumerate}
and the following edges:
\begin{enumerate}
 \item $\{v^p_a,v^p_{a+1}\}$ for every $p\in \Gamma$ and $1\leq a\leq n$;
 \item $\{v^p_i, v_j\}$ if and only if the permutation $p\in \Gamma$ sends $i-th$ element to $j-th$ element.
\end{enumerate}
The graph $\relsys{G}_\Gamma$ thus consist of control vertices and paths representing individual permutations connected
to the control vertices by pairings. By the construction, automorphism of $\relsys{G}_\Gamma$ sends the control vertices to control vertices
(these are the only vertices of large degree).  Similarly the automorphism must send a path
$v^p_1,v^p_2,\ldots , v^p_{n+1}$ corresponding to some permutation $p\in \Gamma$ to another
path $v^{p'}_1,v^{p'}_2,\ldots , v^{p'}_{n+1}$ corresponding to another permutation $p'\in \Gamma$.
It easily follows that the automorphism group of $\relsys{G}_\Gamma$
acting on the control vertices is precisely $\Gamma$.  From transitivity of
$\Gamma$ it is necessary to use at least $(k+1)$-ary relations to homogenize it, giving
$\rc(\relsys{G}_\Gamma)> k$.
We shall remark that this construction in fact works for all groups with large relational complexity
in the sense of \cite{Cherlin:2000}. 

Identifying less artificial families of graphs
with large relational complexity is however challenging. We outline two examples given by~\cite{Cherlin:2000}.
\begin{enumerate}
\item 
 The {\em Johnson's graph} $J_{n,k}$ is the graph whose correspond to the $k$-subsets of $n$ element set. Two vertices are adjacent when two corresponding sets meets in exactly $k - 1$ elements. \cite{CherlinMartinSaracino:1996} give a bound on relational complexity of Johnson's graphs: $\rc(J_{n,k}) \leq 2[\log_2 k]$. Equality is achieved when $n \geq 2\log_2k + 2$. 

\item
The {\em Knesser graphs} $KG_{n,k}$ is the graph whose vertices correspond to the  $k$-subsets of a set of $n$ elements, and where two vertices are adjacent if and only if the two corresponding sets are disjoint. For  $n \geq 2k$ the relational complexity is $\rc(KG_{n,k}) = 2[\log_2 k]$.

In particular, the relational complexity of the Petersen graph is $3$ and thus the problem shown in Figure~\ref{fig:petersenhom} is in fact
the only obstacle its ultrahomogeneity.
\end{enumerate}

It seems that the upper bound of relational complexity given by
Proposition~\ref{prop:upfinite} is far from reality, since the relational
complexity of all examples grows sublogarithmically in their size.  It is
reasonable to ask for extremal examples of graphs with high relational
complexity or for better bounds.

\begin{problem} Estimate $f(n)=\max_{|G|=n} rc(G)$.
\end{problem}


\section{Complexity of infinite structures}
\label{sec:infinite}

We discussed primarily the relational complexity of finite structures. Now we turn our attention to the infinite case.  First we briefly outline
the \Fraisse{} Theorem that can be seen as a ``zero instance'' of problems considered in this section.

Let $\K$ be a class of finite structures, by $\Age(\K)$ we denote the class of all finite structures isomorphic to an (induced) substructure of some $\relsys{A}\in \K$ and call it the {\em age of $\K$}. Similarly, for a structure $\relsys{A}$, the age of $\relsys{A}$, $\Age(\relsys{A})$, is $\Age(\{\relsys{A}\})$.


Let $\relsys{A},\relsys{B},\relsys{C}$ be relational structures, $\alpha$ an
embedding of $\relsys{C}$ into $\relsys{A}$, and $\beta$ an embedding of
$\relsys{C}$ into $\relsys{B}$.  An {\em amalgamation of $(\relsys{A},
\relsys{B}, \relsys{C}, \alpha, \beta)$} is any triple
$(\relsys{D},\gamma,\delta)$, where $\relsys{D}$ is a relational structure,
$\gamma$ an embedding $\relsys{A}\to \relsys{D}$ and $\delta$ an embedding
$\relsys{B}\to\relsys{D}$ such that $\gamma\circ\alpha = \delta\circ\beta$.
Less formally, an amalgamation ``glues together'' the structures
$\relsys{A}$ and $\relsys{B}$ into a single substructure of $\relsys{D}$ such
that copies of $\relsys{C}$ coincide. Class $\K$ is have {\em amalgamation property}
if for $\relsys{A},\relsys{B},\relsys{C}\in \K$ and $\alpha$ an embedding of
$\relsys{C}$ into $\relsys{A}$, $\beta$ an embedding of $\relsys{C}$ into
$\relsys{B}$, there exists $(\relsys{D},\gamma,\delta), \relsys{D}\in \K$, that is an amalgamation of
$(\relsys{A}, \relsys{B}, \relsys{C}, \alpha, \beta)$.

A class $\K$ of finite relational structures is called an {\em amalgamation
class} if it is hereditary (i.e. for every $\relsys{A}\in \K$ and induced
substructure $\relsys{B}$ of $\relsys{A}$ we have $\relsys{B}\in \K$), closed
under isomorphism, has countably many mutually non-isomorphic structures and
has the amalgamation property.  The following classical result establishes the
connection in between amalgamation classes and ultrahomogeneous structures.

\begin{theorem}[\Fraisse{} Theorem~\cite{Fraisse:1953}]
\label{fraissethm}
(a) A class $\K$ of finite structures is the age of a countable ultrahomogeneous structure $\relsys{H}$ if and only if $\K$ is an amalgamation class. 
(b) If the conditions of (a) are satisfied then the structure $\relsys{H}$ is unique up to isomorphism. 
\end{theorem}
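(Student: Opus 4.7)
The plan is to prove the two directions of (a) separately and then deduce (b) by a back-and-forth argument. For the ``only if'' direction, assume $\relsys{H}$ is a countable ultrahomogeneous structure with $\K=\Age(\relsys{H})$. Heredity, closure under isomorphism, and countability of isomorphism types are immediate from the definition of age together with countability of $\relsys{H}$. For the amalgamation property, given $\relsys{A},\relsys{B},\relsys{C}\in\K$ and embeddings $\alpha\colon\relsys{C}\to\relsys{A}$, $\beta\colon\relsys{C}\to\relsys{B}$, I first fix embeddings $f\colon\relsys{A}\to\relsys{H}$ and $g\colon\relsys{B}\to\relsys{H}$. Then $f\circ\alpha$ and $g\circ\beta$ are two embeddings of $\relsys{C}$ into $\relsys{H}$, so ultrahomogeneity of $\relsys{H}$ provides an automorphism $\sigma$ with $\sigma\circ g\circ\beta=f\circ\alpha$. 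The finite substructure of $\relsys{H}$ induced on $f(A)\cup\sigma(g(B))$ lies in $\K$ and serves as an amalgam via $f$ and $\sigma\circ g$.

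For the ``if'' direction I would construct $\relsys{H}$ as the union of a chain $\relsys{H}_0\subseteq\relsys{H}_1\subseteq\cdots$ with each $\relsys{H}_n\in\K$. Heredity combined with amalgamation yields a joint embedding property, so one can begin with an arbitrary $\relsys{H}_0\in\K$. The bookkeeping must interleave two tasks: (i) for every $\relsys{A}\in\K$ one must eventually embed $\relsys{A}$ into some $\relsys{H}_n$, and (ii) for every finite substructure $\relsys{C}$ of some $\relsys{H}_n$ and every extension $\relsys{C}\hookrightarrow\relsys{A}$ with $\relsys{A}\in\K$, one must realize this extension within some later $\relsys{H}_m$ by applying amalgamation to $(\relsys{H}_n,\relsys{A},\relsys{C})$. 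Since there are only countably many such requirements, a standard diagonal enumeration handles both. Task (i) guarantees $\Age(\relsys{H})=\K$, and task (ii), together with heredity, yields the extension property for finite partial isomorphisms. Ultrahomogeneity of $\relsys{H}$ is then derived by a routine back-and-forth: given finite isomorphic substructures $\relsys{C}_1,\relsys{C}_2\subseteq\relsys{H}$ and an enumeration of $\relsys{H}$, one alternately uses (ii) on each side to extend a given partial isomorphism to include the next element of the enumeration, and takes the union.

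Part (b) follows by a direct back-and-forth between two countable ultrahomogeneous $\relsys{H}$ and $\relsys{H}'$ with identical ages: enumerate their universes and alternately extend a growing finite partial isomorphism to capture one more point from each side, using that the newly enlarged configuration belongs to the common age $\K$ and hence embeds into the target structure, while ultrahomogeneity of the target structure allows the embedding to agree with the already-matched part. The main obstacle, and where care is required, is the enumeration in the ``if'' direction: one must dovetail tasks (i) and (ii) so that the amalgamation steps do not grow the age beyond $\K$ (which is automatic because each $\relsys{H}_n\in\K$) while still meeting every extension demand. Once this scheduling is in place, both ultrahomogeneity of $\relsys{H}=\bigcup_n \relsys{H}_n$ and the uniqueness claim reduce to the same back-and-forth pattern.
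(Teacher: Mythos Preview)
Your outline is the standard proof of \Fraisse's theorem and is correct as a sketch: the ``only if'' direction uses ultrahomogeneity to align two embedded copies of $\relsys{C}$ inside $\relsys{H}$, the ``if'' direction builds $\relsys{H}$ as an increasing union via a countable dovetailing of amalgamation tasks, and uniqueness is the usual back-and-forth. One small point worth tightening is that task~(i) is redundant once you have joint embedding and task~(ii), since every $\relsys{A}\in\K$ extends the empty structure; but this does not affect correctness.

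However, there is nothing to compare against here. The paper does not prove Theorem~\ref{fraissethm}; it is quoted as a classical result with a reference to \cite{Fraisse:1953} (see also \cite{Hodges:1993}) and is used as background for the discussion in Section~\ref{sec:infinite}. So your proposal is not an alternative to the paper's argument---the paper simply has none for this statement.
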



By \Fraisse{} Theorem the amalgamation property can be seen as the critical property of age $\K$ such that there exists structure $\relsys{U}$ with $\Age(\relsys{U})=\K$ satisfying $\rc(\relsys{A})=0$. Being inspired with this approach we could, for a fixed age $\K$, ask for bounds of relational complexity for structures $\relsys{U}$ having $\Age(\relsys{U}) = \K$ or alternatively having $n$ fixed, we seek for structural properties of age $\K$ implying the existence of structure $\relsys{U}$ with $\Age(\relsys{U}) = \K$ and $\rc(\relsys{U}) = n$.

Relational complexity is not interesting for rigid structures (with trivial
automorphism group), where it is always 1. Such a structure exists for
almost every age.  We thus restrict our attention to $\omega$-categorical
structures.  Recall 
that countably infinite structure is called {\em
$\omega$-categorical} if all countable models of its first order theory are
isomorphic. Equivalently the $\omega$-categorical structures can be
characterized as structures whose automorphism group has only finitely many
orbits on $n$-tuples, for every $n$, and thus also there are only finitely many
invariant relations of arity $n$, see \cite{Hodges:1993}. 

Moreover countable $\omega$-categorical structure $\relsys{U}$ contains as an
induced substructure every countable structure $\relsys{A}$,
$\Age(\relsys{A})\subseteq \Age(\relsys{U})$, see \cite{Cameron:1992}.  We say that
$\relsys{U}$ is {\em universal} for the class of all structures of age at most
$\Age(\relsys{A})$ (also called structures {\em younger} than $\relsys{U}$).


There is no 1-1 correspondence in between $\omega$-categorical structures and
their ages.  We will demonstrate this on the class of bipartite graphs.

Consider class $\K$ of lifts of finite bipartite graphs with one partition
distinguished by an extended unary relation.  $\K$ is an amalgamation class and
thus there is an (up to isomorphism unique) infinite ultrahomogeneous lift
$\relsys{X}, \Age(\relsys{X})=\K$.
Now consider a bipartite graph $\relsys{B}_2$ that is shadow of $\relsys{X}$.
Graph $\relsys{B}_2$ is not ultrahomogeneous, it is however universal bipartite
graph (because $\relsys{X}$ is universal for bipartite graphs with one
partition distinguished).  All vertices of $\relsys{B}_2$ have infinite degree
and $\relsys{B}_2$ is connected.  Age of $\relsys{B}_2$ is the class of all
finite bipartite graphs.  Every countable bipartite graph can be found as an
induced subgraph of $\relsys{B}_2$ (it is an universal bipartite graph). 
Because $\relsys{B}_2$ is vertex transitive, there are no non-trivial invariant
unary relations.  It is
possible to turn $\relsys{B}_2$ into a ultrahomogeneous lift by aid of 2 invariant
binary relations. First relation connect every two vertices within the same
partition. Second relation connect every pair of vertices in different
partitions. Consequently $\rc(\relsys{B}_2)=2$ while $\lc(\relsys{B}_2)=1$.

Denote by $L$ the set of all vertices of $\relsys{X}$ in the partition
distinguished by the unary relation.  Now consider graph $\relsys{B}_1$
created from $\relsys{X}$ by adding a new vertex of degree 1 for every $v\in L$ 
along with an edge connecting both vertices.  The automorphism of $\relsys{B}_1$ necessarily maps
vertices of degree 1 to vertices of degree 1 and thus can not swap the
partitions.  It is thus possible to turn $\relsys{B}_1$ into a ultrahomogeneous
structure with 2 unary relations and $\rc(\relsys{B}_1)=\lc(\relsys{B}_1)=1$.
The age is however unchanged.

Finally it is possible to construct, for given $n>2$, a connected bipartite graph
$\relsys{B}_n$, $\rc(\relsys{B}_n)=n$. Take any connected bipartite graph $\relsys{A}_n$, $\rc(\relsys{A}_n)=n$. (Such a graph can be constructed by techniques of Section \ref{sec:large}; non-bipartite graphs can be turned into bipartite by subdividing every edge by an vertex). Construct $\relsys{B}_n$ as a disjoint union of $\relsys{B}_n$ and $\relsys{A}_n$ with one vertex unified.

It follows that an $\omega$-categorical relational structure with age
consisting of all finite bipartite graph can have relational complexity
anywhere in between 1 and infinity. This is not a sporadic example and
we thus need to add extra restrictions on the structures in consideration.
Among all $\omega$-categorical structures with a given age we can turn
our attention to the ``most ultrahomogeneous like'' in the following sense. Structure $\relsys{A}$ with $\Age(\relsys{A})=\K$ is {\em
existentially complete} if for every structure $\relsys{B}$,
such that $\Age(\relsys{B})=\K$ and the identity mapping (of $A$) is an embedding $\relsys{A}\to\relsys{B}$,
every existential statement $\psi$ which is defined in $\relsys{A}$ and
true in $\relsys{B}$ is also true in $\relsys{A}$.  By \cite{CherlinShelahShi:1999}
for every age $\K$ defined by forbidden monomorphisms with $\omega$-categorical
universal structure there is also up to isomorphism unique $\omega$-categorical, existentially complete, and $\omega$-saturated
universal structure. This in fact holds more generally.
In such cases, for a given age $\K$, the {\em canonical universal structure} of age $\K$
is the unique $\omega$-categorical, existentially complete, and $\omega$-saturated
structure $\relsys{U}$ such that $\Age(\relsys{U})=\K$.

Given an age $\K$ we can thus ask:
\begin{itemize}
 \item[I.] What is the minimal relational complexity of an $\omega$-categorical
  structure $\relsys{U}$ such that $\Age(\relsys{U})=\K$?
 \item[II.] What is the relational complexity of the canonical universal structure of age $\K$?
\end{itemize}

We consider universal structures for class $\Forb(\F)$ where $\F$ is a family of
connected structures.
 $\Forb(\F)$ denotes
the class of all structures $\relsys{A}$ for which there is no homomorphism
$\relsys{F}\to \relsys{A}$, $\relsys{F}\in \F$.
Classes $\Forb(\F)$ are among the most natural ones where the
existence of a universal structure is guaranteed for every finite $\F$, see \cite{CherlinShelahShi:1999}. 
For such $\F$ we can fully answer the questions
above.

For a structure $\relsys{A}=(A,(\rel{A}{i},i\in I))$, the {\em Gaifman graph} (in combinatorics often called {\em 2-section}) is the graph $G_\relsys{A}$ with vertices $A$ and all those edges which are a subset of a tuple of a relation of $\relsys{A}$, i.e., $G=(A,E),$ where ${x,y}\in E$ if and only if $x\neq y$ and there exists a tuple $\vec{v}\in \rel{A}{i}$, $i\in I$, such that $x,y\in \vec{v}$

For a structure $\relsys{A}$ and a subset of its vertices $B\subseteq A$, the
 {\em neighborhood} of set $B$ is the set of all
vertices of $A\setminus B$ connected in $G_\relsys{A}$ by an edge
to a vertex of $B$.
 We denote by $G_\relsys{A}\setminus B$ the graph
created from  $G_\relsys{A}$ by removing the vertices in $B$.

\begin{figure}[ht!]
\centerline{\includegraphics[width=6cm]{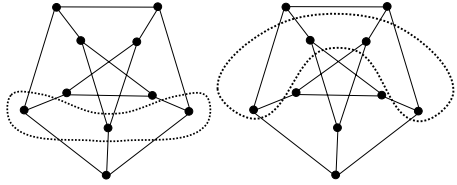}}
\caption{Two minimal g-separating g-cuts of the Petersen graph.}
\label{fig:petersoni}
\end{figure}

A {\em g-cut} in $\relsys{A}$ is a subset $C$ of $A$ that is a vertex cut of $G_\relsys{A}$.
A g-cut $C$ is {\em minimal g-separating} in $\relsys{A}$ if there exists
structures $\relsys{A}_1\neq \relsys{A}_2$ induced by $\relsys{A}$ on two connected components of $G_\relsys{A}\setminus C$
such that
$C$ is the intersection of the neighborhood of $A_1$ and the neighborhood of $A_2$ in $\relsys{A}$.

All minimal separating cuts of the Petersen graph (up to isomorphisms) are depicted in Figure \ref{fig:petersoni}.

A family of structures is called {\em minimal}  if and only if
all structures in $\F$ are cores and there is no homomorphism between two
structures in $\F$.
\begin{theorem}
\label{mainthm}
Let $\F$ be a finite minimal family of finite connected relational structures and
$\relsys{U}$ an $\omega$-categorical universal structure for $\Forb(\F)$. Denote by $n$ the size of the largest minimal g-separating g-cut in $\F$. Then
(a) $\rc(\relsys{U})\geq n$;
(b) if $\relsys{U}$ is the canonical universal structure for $\Forb(\F)$, then $\rc(\relsys{U})=n$.
\end{theorem}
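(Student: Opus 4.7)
The plan is to prove the two directions separately. For (a), I would construct two $n$-tuples in $\relsys{U}$ that induce isomorphic substructures and whose proper subtuples sit in matching $\Aut(\relsys{U})$-orbits, yet which themselves lie in distinct orbits; this forces $\rc(\relsys{U})\geq n$. For (b), I would run back-and-forth on the lift of the canonical $\relsys{U}$ by all invariants of arity at most $n$, using the extension property of the canonical structure and the fact that every forbidden $F\in\F$ attaches to the ambient structure through a separator of size at most $n$.

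For (a), pick $F\in\F$ with a minimal g-separating g-cut $C$ of size $n$, and let $A_1,A_2$ be the substructures of $F$ induced by $C$ together with a single connected component of $G_F\setminus C$ each, chosen in two distinct components; then $F=A_1\cup A_2$ with $A_1\cap A_2=C$. Minimality of $\F$ and the core property force $A_1,A_2\in\Forb(\F)$: a homomorphism $F'\to A_i$, composed with $A_i\hookrightarrow F$, yields $F'\to F$, which for $F'\neq F$ violates minimality and for $F'=F$ gives an endomorphism of a core that factors through the proper substructure $A_i$. Embed $A_1$ and $A_2$ into $\relsys{U}$, and let $\vec{c}_1,\vec{c}_2$ enumerate the images of $C$. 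If some $\sigma\in\Aut(\relsys{U})$ mapped $\vec{c}_1\mapsto\vec{c}_2$, then $\sigma$ would carry the $A_1$-copy to another copy of $A_1$ attached at $\vec{c}_2$, which combined with the $A_2$-copy would produce a homomorphism $F\to\relsys{U}$, contradicting $\relsys{U}\in\Forb(\F)$; hence $\vec{c}_1,\vec{c}_2$ lie in different orbits. For matching of proper subtuple orbits, fix $v\in C$: by minimality of the cut every vertex of $C\setminus\{v\}$ still has neighbors in both sides of $G_F\setminus C$, so $F\setminus\{v\}$ is connected and (by the same core argument) belongs to $\Forb(\F)$. Embedding $F\setminus\{v\}$ into $\relsys{U}$ produces an $(n-1)$-tuple that extends both to a copy of $A_1\setminus\{v\}$ and to a copy of $A_2\setminus\{v\}$. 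Using $\omega$-categoricity (countable $\omega$-categorical structures are $\omega$-saturated) one can arrange the embeddings of $A_1$ and $A_2$ so that the $v$-omitted subtuples of $\vec{c}_1$ and $\vec{c}_2$ share the orbit of this common $(n-1)$-tuple; iterating for every $v$ and every smaller subset yields matching orbits at all arities below $n$.

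For (b), take $\relsys{U}$ to be canonical, form $\relsys{U}^+$ by adjoining all invariants of arity $\leq n$, and show ultrahomogeneity of $\relsys{U}^+$ by back-and-forth. Given a partial isomorphism $f\colon B\to B'$ of finite substructures of $\relsys{U}^+$ and any $b\in U\setminus B$, one seeks $b'\in U\setminus B'$ extending $f$. The required partial type for $b'$ over $B'$ specifies the induced substructure on $B'\cup\{b'\}$ and the orbit of every $n$-subtuple; by $\omega$-saturation of the canonical $\relsys{U}$, such $b'$ exists provided this type is finitely consistent. The only obstruction would be that adding $b'$ completes a homomorphic image of some $F\in\F$; but then there is a minimal g-separating g-cut $C$ of $F$ of size $\leq n$ whose image lies in $B'$ and which separates the $b'$-side of the $F$-image from its other side. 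The orbit of $C$'s image in $B'$ is recorded by the $n$-ary invariants, and $f$ preserves these, so the same obstruction would apply to $b$ over $B$ via $f^{-1}$, contradicting the presence of $b$. Hence $b'$ exists, the back-and-forth succeeds, and $\rc(\relsys{U})\leq n$, which combined with (a) gives equality.

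The hardest step is the final claim in (b) that every obstruction to extending $b'$ is witnessed by a cut of size at most $n$. Formally, for any putative homomorphism $\phi\colon F\to\relsys{U}[B'\cup\{b'\}]$, one must select a minimal g-separating g-cut $C$ of $F$ lying in $\phi^{-1}(B')$ such that a single component of $G_F\setminus C$ contains $\phi^{-1}(b')$; this is exactly where the hypothesis on the maximum minimal g-separating g-cut size is invoked, and it requires a careful combinatorial argument showing that such a separating cut always exists. A similar subtlety appears in (a) when verifying that the $(n-1)$-subtuples of $\vec{c}_1$ and $\vec{c}_2$ actually share a common orbit, which requires $\omega$-saturation to upgrade ``same existential type'' to ``same orbit'' and uses connectedness of $F\setminus\{v\}$.
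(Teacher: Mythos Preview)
Your approach diverges substantially from the paper's. The paper does not argue either direction directly: for (a) it invokes a Ramsey-type black box (Theorem~\ref{aritythm} from \cite{BubickaBubak}) stating that no ultrahomogeneous universal lift of $\Forb(\F)$ with arity below the largest minimal g-separating g-cut can exist, and then applies it to the lift of $\relsys{U}$ by $\Inv_{\leq k}$; for (b) it quotes the explicit lift construction of \cite{BubickaNesetril:toapear}, in which the new relations are indexed by the g-separating g-cuts themselves, and observes that the resulting shadow is the canonical structure. Your plan instead attempts a hands-on construction of witnessing $n$-tuples for (a) and a bare back-and-forth for (b).

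The serious gap is in (a). You produce $\vec c_1,\vec c_2$ in distinct orbits, which is fine, but the step ``arrange the embeddings of $A_1$ and $A_2$ so that the $v$-omitted subtuples of $\vec c_1$ and $\vec c_2$ share the orbit of this common $(n-1)$-tuple; iterating for every $v$ and every smaller subset'' is not justified and is very likely false for a general $\omega$-categorical universal $\relsys{U}$. You are imposing, on a \emph{single} embedding of $A_1$ (and separately of $A_2$), one orbit constraint for every proper subset of $C$ simultaneously; $\omega$-saturation only lets you realise a consistent type, it does not let you place a tuple into a prescribed \emph{orbit} unless that orbit is already determined by the type, and in an arbitrary $\omega$-categorical $\relsys{U}$ the orbit of a subtuple of a cut may well record whether the surrounding configuration extends to an $A_1$-copy rather than an $A_2$-copy. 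The paper is explicit that the Ramsey argument ``is not a technical finesse but this is in a way necessary'', which is a strong signal that this kind of direct orbit-matching does not go through for arbitrary $\omega$-categorical $\relsys{U}$. (Your argument also tacitly assumes $F=A_1\cup A_2$, i.e.\ only two components behind the cut; this is reparable but should be said.)

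For (b) your outline is closer in spirit to what the cited constructions actually accomplish, but you yourself flag the crux: that every obstruction to the one-point extension is detected by an invariant of arity $\leq n$ attached to a minimal g-separating g-cut. This is precisely the content of the Hubi\v cka--Ne\v set\v ril lift, and proving it amounts to redoing that construction; the back-and-forth wrapper adds nothing until that lemma is in hand. In short, (b) is a reasonable sketch modulo the same external input the paper uses, while (a) as written does not establish the lower bound for all $\omega$-categorical universal $\relsys{U}$.
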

In the rest of the paper we show the upper bounds and lower bounds given by
Theorem \ref{mainthm}. We prove more general statements in fact.

\subsection {Upper bounds on relational complexity}

It appears that relational complexity is closely related to the homogenization method of
constructing universal structures as used in \cite{Covington:1990}.  
The main result of \cite{Covington:1990} is in fact a variant of \Fraisse{}
Theorem with the amalgamation reduced to so-called local
failure of amalgamation.


The {\em Amalgamation failure} of a given age $\K$ is a triple
$(\relsys{A},\relsys{B},\relsys{C})$ such that $\relsys{A},\relsys{B}, \relsys{C} \in \K$,
the identity mapping (of $C$) is an embedding $\relsys{C}\to\relsys{A}$ and $\relsys{C}\to\relsys{B}$, and
there is no amalgamation of $\relsys{A}$ and $\relsys{B}$ over $\relsys{C}$  in $\K$. (i.e.~$(\relsys{A},\relsys{B},\relsys{C})$ shows that $\K$ has no amalgamation property).
Amalgamation failure is {\em minimal} if there is no another amalgamation failure
$(\relsys{A}',\relsys{B}',\relsys{C}')$ such that identity mappings are embeddings $\relsys{A}'\to \relsys{A}$, $\relsys{B}'\to\relsys{B}$ and $\relsys{C}'\to\relsys{C}$.

\begin{theorem}
\label{thm:upperbound2}
Let $\relsys{U}$ be the canonical universal structure for age $\K$
and $S$ the set of isomorphism types of minimal amalgamation failures of
$\relsys{U}$. If $S$ is finite then $\rc(\relsys{U})$ and $\lc(\relsys{U})$ is bounded from above by
the largest size of $\relsys{C}$ such that  $(\relsys{A}, \relsys{B},
\relsys{C})\in S$.
\end{theorem}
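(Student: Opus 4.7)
The plan is to construct an invariant ultrahomogeneous lift $\relsys{X}$ of $\relsys{U}$ whose added relations all have arity at most $k$, where $k=\max\{|C|:(\relsys{A},\relsys{B},\relsys{C})\in S\}$; this will bound both $\rc(\relsys{U})$ and $\lc(\relsys{U})$ by $k$ simultaneously. Since $\relsys{U}$ is $\omega$-categorical, the group $\Aut(\relsys{U})$ has only finitely many orbits on $n$-tuples for every $n\leq k$, and I take $\relsys{X}$ to be the lift obtained by adding, for every such orbit $O$, a fresh relation symbol interpreted as $O$ itself. Each such relation is $\Aut(\relsys{U})$-invariant by construction, so it qualifies for both the lift complexity bound and the relational complexity bound.

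The bulk of the work is then to verify that $\relsys{X}$ is ultrahomogeneous. I would do this through the \Fraisse{} Theorem applied to $\K^{+}=\Age(\relsys{X})$; the class $\K^{+}$ is hereditary, isomorphism-closed, and (by $\omega$-categoricity of $\relsys{X}$) has only countably many isomorphism types, so the real content is the amalgamation property. Given $\relsys{X}_1,\relsys{X}_2,\relsys{X}_0\in\K^{+}$ with $\relsys{X}_0$ identified as a common substructure of $\relsys{X}_1$ and $\relsys{X}_2$, I would try to build an amalgam inside $\relsys{X}$: first realise $\relsys{X}_1$ as a substructure of $\relsys{U}$, and then try to embed $\relsys{X}_2$ so that its copy of $\relsys{X}_0$ coincides with the one already chosen. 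If no such embedding exists, the shadows $\relsys{A},\relsys{B},\relsys{C}$ of $\relsys{X}_1,\relsys{X}_2,\relsys{X}_0$ witness an amalgamation failure of $\K=\Age(\relsys{U})$.

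This is where the hypotheses on $\relsys{U}$ come in. Existential completeness and $\omega$-saturation of $\relsys{U}$ let me extract inside this failure a minimal amalgamation failure $(\relsys{A}^{\ast},\relsys{B}^{\ast},\relsys{C}^{\ast})\in S$; by definition of $k$ we have $|C^{\ast}|\leq k$, so $\relsys{C}^{\ast}$ is pinned down by a tuple of arity at most $k$ lying inside $\relsys{X}_0$. The invariant relations added at that arity record the $\Aut(\relsys{U})$-orbit of this tuple, which on the $\relsys{X}_1$ side extends to a copy of $\relsys{A}^{\ast}$ while on the $\relsys{X}_2$ side extends to a copy of $\relsys{B}^{\ast}$. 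Because these two extension types jointly obstruct amalgamation they cannot correspond to the same orbit in $\relsys{U}$, yet $\relsys{X}_0$ is by hypothesis a common substructure of $\relsys{X}_1$ and $\relsys{X}_2$ in the lift. The resulting contradiction forces the amalgamation to succeed, so $\K^{+}$ has the amalgamation property and $\relsys{X}$ is ultrahomogeneous.

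The main obstacle I anticipate is the reduction from a global amalgamation failure of $\relsys{A},\relsys{B}$ over $\relsys{C}$ to a minimal failure over some $\relsys{C}^{\ast}$ of size at most $k$, carried out so that the witnessing tuple genuinely sits inside the common part $\relsys{X}_0$ rather than just somewhere in $\relsys{A}\cup\relsys{B}$. This step is where the canonical nature of $\relsys{U}$ is essential: existential completeness and $\omega$-saturation ensure that orbits on $k$-tuples in $\relsys{U}$ are fine enough to separate incompatible extension types, so that the invariant lift relations of arity at most $k$ genuinely distinguish the two sides of every minimal amalgamation failure in $S$.
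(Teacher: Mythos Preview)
Your overall plan---lift $\relsys{U}$ by all $\Aut(\relsys{U})$-orbits on $\leq k$-tuples and verify amalgamation for the age of this lift---is essentially what the paper obtains by citing Covington's homogenization theorem (in particular Lemma~2.7 of \cite{Covington:1990}). The paper does not reprove that result; you are attempting to, and this is where the sketch breaks.

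The genuine gap is the sentence ``If no such embedding exists, the shadows $\relsys{A},\relsys{B},\relsys{C}$ of $\relsys{X}_1,\relsys{X}_2,\relsys{X}_0$ witness an amalgamation failure of $\K$.'' This implication is not valid. The shadows may well amalgamate in $\K$ to some $\relsys{D}$, and $\relsys{D}$ may embed in $\relsys{U}$, yet the resulting copies of $\relsys{A}$ and $\relsys{B}$ inside $\relsys{U}$ need not carry the same $\leq k$-orbit data that $\relsys{X}_1$ and $\relsys{X}_2$ record. In other words, amalgamating the shadows does not automatically amalgamate the lifts, because the lift relations encode \emph{how} a tuple sits inside $\relsys{U}$, not just its isomorphism type. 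Consequently your reduction to a minimal failure $(\relsys{A}^\ast,\relsys{B}^\ast,\relsys{C}^\ast)\in S$ never gets off the ground: you have no amalgamation failure of shadows to minimise. You correctly flag a worry about locating $\relsys{C}^\ast$ inside $\relsys{X}_0$, but the problem is earlier and more basic than that.

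What Covington's argument actually does is closer to a one-point extension: one shows that if two tuples in $\relsys{U}$ agree on all $\leq k$-ary orbit data then for any one-point extension of the first there is a matching one-point extension of the second, and it is \emph{here} that a failure produces a genuine minimal amalgamation failure with base of size at most $k$. The finiteness of $S$, together with existential completeness and $\omega$-saturation of $\relsys{U}$, is used to ensure that the relevant extension types over $k$-element sets are already separated by the orbit relations. Your final paragraph gestures at this, but the argument as written does not supply it; to complete the proof you would need to carry out this extension step explicitly rather than passing through a putative shadow-level amalgamation failure.
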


\begin{proof}
Given an age $\K$ and set $S$, 
\cite{Covington:1990} provide construction of ultrahomogeneous lift $\relsys{X}$ such that its
shadow $\relsys{U}$ is universal for the class of structures of age $\K$.
Moreover this lift is constructed using relations invariant in 
the automorphism group of $\relsys{U}$ (Lemma~2.7 in \cite{Covington:1990}) and their
arities correspond to the sizes of $\relsys{C}$ such that  $(\relsys{A},
\relsys{B}, \relsys{C})\in S$.  Because $\relsys{U}$ is an shadow of
$\relsys{X}$, it is $\omega$-categorical.  The existential
completeness and $\omega$-saturation follows directly from the 
construction.
\end{proof}

\begin{figure}[ht!]
\centerline{\includegraphics[width=8cm]{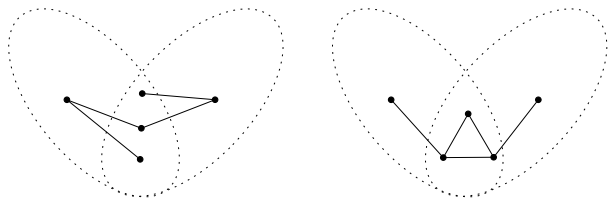}}
\caption{Amalgamation failures of the class of graphs not containing an induced path of length 3.}
\label{amalfailure}
\end{figure}

\paragraph{\bf Example.}
Determining the minimal set of amalgamation failures of a given age can be
difficult. \cite{Covington:1990} shows only one non-trivial example, the cographs (i.e. graphs without induced path on 4 vertices.) The minimal amalgamation failures are
depicted in Figure \ref{amalfailure}. This class has only 2
minimal failures both with 3 vertices in $\relsys{C}$. The resulting lift
use relation of arity 3. This is in contrast with the fact that all finite
cographs have complexity at most 2 (Proposition~\ref{prop:cographs}).

In the special case of $\K=\Age(\Forb(\F))$ one can prove a stronger result.
\begin{theorem}
\label{thm:upperbound}
Let $\F$ be a (finite or infinite) family of connected structures 
such that there exists $\relsys{U}$, the canonical universal structure for $\Age(\Forb(\F))$.
Then $\rc(\relsys{U})$ and $\lc(\relsys{U})$ is bounded from above by the size of the largest minimal
g-separating g-cut in $\F$.
\end{theorem}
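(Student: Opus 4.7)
The plan is to reduce this to Theorem~\ref{thm:upperbound2} by showing that in every minimal amalgamation failure $(\relsys{A},\relsys{B},\relsys{C})$ of $\K=\Age(\Forb(\F))$, the size $|C|$ is at most the size of a minimal g-separating g-cut in some $\relsys{F}\in\F$. Combined with Theorem~\ref{thm:upperbound2}, and noting that the canonical universal structure exists because $\F$ consists of connected structures, this yields both upper bounds.

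First I would observe that the free amalgamation $\relsys{D}=\relsys{A}\cup_{\relsys{C}}\relsys{B}$ (gluing along $\relsys{C}$ without adding new tuples) certifies the failure of amalgamation in $\K$: because $\relsys{D}\notin \Age(\Forb(\F))$ while $\relsys{A},\relsys{B}\in\Forb(\F)$, there must be some $\relsys{F}\in\F$ and a homomorphism $h\colon\relsys{F}\to\relsys{D}$. Connectedness of $\relsys{F}$ and the fact that $\relsys{F}\not\to\relsys{A}$ and $\relsys{F}\not\to\relsys{B}$ individually force $h$ to use vertices on both sides of $\relsys{C}$. Writing $C_\relsys{F}=h^{-1}(C)$, $F_A=h^{-1}(A\setminus C)$, $F_B=h^{-1}(B\setminus C)$, every edge of the Gaifman graph $G_\relsys{F}$ maps to an edge of $G_\relsys{D}$, and $C$ separates $A\setminus C$ from $B\setminus C$ in $G_\relsys{D}$ by the freeness of the amalgamation. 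Hence $C_\relsys{F}$ is a g-cut of $\relsys{F}$ separating $F_A$ from $F_B$.

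Next I would invoke minimality of the failure to pin down the combinatorics of $C_\relsys{F}$. If $C_\relsys{F}$ were not itself minimal g-separating in $\relsys{F}$ — either because a proper subset already separates $F_A$ from $F_B$, or because $C_\relsys{F}$ strictly contains the intersection of the neighborhoods of the two components of $G_\relsys{F}\setminus C_\relsys{F}$ meeting $F_A$ and $F_B$ — one could cut $\relsys{A},\relsys{B},\relsys{C}$ down to the $h$-images of the corresponding parts of $\relsys{F}$ and obtain a strictly smaller amalgamation failure witnessed by the same $\relsys{F}$. This contradicts minimality. Thus $C_\relsys{F}$ is a minimal g-separating g-cut of $\relsys{F}$, and since $h$ maps $C_\relsys{F}$ onto (at least) $C$, we get $|C|\le |C_\relsys{F}|\le n$.

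The main obstacle is the truncation argument in the last step: one must verify that after restricting $\relsys{A}$ and $\relsys{B}$ to the $h$-images of the two halves of $\relsys{F}$ (retaining just the vertices of $C$ hit by $C_\relsys{F}$) the resulting triple really remains an amalgamation failure, i.e.~no extraneous $\relsys{F}'\in\F$ slips in elsewhere. This is where the assumption that members of $\F$ are connected and the definition of g-separating cut as an \emph{intersection} of neighborhoods become essential: the separation witnessed by $h$ is entirely localized to $\relsys{F}$, so the truncated $(\relsys{A}',\relsys{B}',\relsys{C}')$ inherits the same witnessing homomorphism and no other obstruction is gained.
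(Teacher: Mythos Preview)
Your reduction to Theorem~\ref{thm:upperbound2} does not go through: there are minimal amalgamation failures $(\relsys{A},\relsys{B},\relsys{C})$ of $\Age(\Forb(\F))$ with $|C|$ strictly larger than every minimal g-separating g-cut in $\F$. The paper itself flags this: in the examples following Theorem~\ref{thm:upperbound} it notes that for $\F$ consisting of the oriented path $v_1\to v_2\to v_3\to v_4$ the bound from Theorem~\ref{thm:upperbound2} is \emph{not} tight. Concretely, take $\relsys{A}=\{a,b,c,d\}$ with arcs $a\to b$ and $c\to d$, $\relsys{B}=\{b,c\}$ with arc $b\to c$, and $\relsys{C}=\{b,c\}$. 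Every amalgam contains $a\to b\to c\to d$, so this is a failure; one checks directly that no proper sub-triple is a failure, so it is minimal with $|C|=2$, whereas every minimal g-separating g-cut of the path has size $1$.

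The error is in your truncation step. With $h$ the obvious isomorphism $v_i\mapsto a,b,c,d$, you have $C_\relsys{F}=h^{-1}(C)=\{v_2,v_3\}$, which is not minimal g-separating. You propose to pass to a smaller cut, say $\{v_2\}$; but then the new ``$B$-side'' of $\relsys{F}$ is $\{v_3,v_4\}$, whose $h$-image $\{c,d\}$ is \emph{not} contained in $B=\{b,c\}$. In general, shrinking the cut in $\relsys{F}$ redistributes vertices of $\relsys{F}$ between the two sides, but the partition of $\relsys{D}=\relsys{A}\cup_{\relsys{C}}\relsys{B}$ into $A$ and $B$ is fixed; the new partition of $\relsys{F}$ need not be compatible with it, so the restricted triple is not a sub-triple of $(\relsys{A},\relsys{B},\relsys{C})$ and minimality gives you nothing. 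The paper therefore does not argue via Theorem~\ref{thm:upperbound2} at all; it invokes the explicit lift construction of \cite{BubickaNesetril:toapear}, in which the added relations are indexed directly by the minimal g-separating g-cuts of structures in $\F$, and then argues separately that the resulting shadow is the canonical universal structure.
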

\begin{proof}[Proof (sketch).]
This is a consequence of \cite{BubickaNesetril:toapear}.
The main construction in \cite{BubickaNesetril:toapear} builds lift of the class $\Forb(\F)$
similar to one in \cite{Covington:1990}.
The new relations corresponds to individual g-separating g-cuts of structures in $\F$.
This immediately give the bound on lift complexity (stated as \cite{BubickaNesetril:toapear},
Theorem 3.1).

There is however important difference in between \cite{Covington:1990} and
\cite{BubickaNesetril:toapear}. The lifts produced in \cite{BubickaNesetril:toapear} are built in fully
algorithmic way and they avoid the existential completeness argument.  In our
case this is an complication; indeed the shadows of lifts introduced in
\cite{BubickaNesetril:toapear} are not existentially complete.  To fully show
Theorem \ref{thm:upperbound} one needs to re-prove \cite{BubickaNesetril:toapear} with existential
completeness argument. This will appear elsewhere.
\end{proof}

\paragraph{{\bf Examples.}}
Theorem \ref{thm:upperbound} can be easily applied to many families $\F$.  For example:

\begin{enumerate}
\item Let $\F$ be family of relational trees and $\relsys{U}$ the canonical universal structure for $\Forb(\F)$ (if it exists).
By Theorem \ref{thm:upperbound} $\rc(\relsys{U})\leq 1$. In fact $\relsys{U}$ can be seen
as a ``blown up'' core of a homomorphism dual $\relsys{D}$ (given by \cite{NesetrilTardif:2000} even for some infinite families $\F$) where each vertex is replaced by infinitely many
vertices and each edge by a random bipartite graph. In this case the bound given by Theorem \ref{thm:upperbound2} is not tight even for $\F$ consisting
of an oriented path on 4 vertices.

\item Let $\F_{C_n}$ contain a single odd graph cycle on $n$ vertices. The relational complexity the canonical universal structure for $\Forb(\F_{C_n})$ is at most 2.

\item Let $\F_{odd}$ be the class of all odd graph cycles. The canonical universal structure for $\Forb(\F_{odd})$ is the random bipartite graph $\relsys{B}_2$. By Theorem \ref{thm:upperbound}
$\rc(\relsys{B})\leq 2$.
\end{enumerate}

\subsection {Lower bounds on relational complexity}
\label{lowerbound}

We obtain the following bound:
\begin{theorem}
\label{prop1}
Let $\F$ be a finite minimal family of finite connected structures and
$\relsys{U}$ an $\omega$-categorical universal structure for $\Forb(\F)$.
Then $\rc(\relsys{U})$ and $\lc(\relsys{U})$ is bounded from bellow by the
size of largest minimal g-separating g-cut in $\F$.
\end{theorem}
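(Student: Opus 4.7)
The plan is to exhibit a partial isomorphism inside $\relsys{U}$ (and inside any putative ultrahomogeneous lift of arity below $n$) which cannot be extended, forcing an $n$-ary relation to be required. Fix $\relsys{F}\in\F$ realising a minimal g-separating g-cut $C$ of the maximal size $n$, with witnessing Gaifman-components $A_1,A_2$ of $G_{\relsys{F}}\setminus C$, and set $\relsys{F}_i=\relsys{F}|_{C\cup A_i}$. Since $C$ is a g-cut, no relational tuple of $\relsys{F}$ simultaneously meets $A_1$ and $A_2$, so each tuple of $\relsys{F}$ lies in $\relsys{F}_1$ or in $\relsys{F}_2$; consequently $\relsys{F}$ is obtained from $\relsys{F}_1$ and $\relsys{F}_2$ by identifying their common copy of $C$. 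The minimality of $\F$ combined with the core property of $\relsys{F}$ yields $\relsys{F}_1,\relsys{F}_2\in\Forb(\F)$: any homomorphism $\relsys{F}'\to\relsys{F}_i$ with $\relsys{F}'\in\F$ would, by minimality of $\F$, force $\relsys{F}'=\relsys{F}$ and then give a homomorphism $\relsys{F}\to\relsys{F}_i\subsetneq\relsys{F}$, contradicting the fact that $\relsys{F}$ is a core. Hence $\relsys{F}_1,\relsys{F}_2$ embed into $\relsys{U}$.

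Fix embeddings of $\relsys{F}_1,\relsys{F}_2$ into $\relsys{U}$ and call their $C$-parts $C^{(1)},C^{(2)}$. If some single copy of $C$ in $\relsys{U}$ had both an $\relsys{F}_1$- and an $\relsys{F}_2$-extension, gluing the two extensions would place $\relsys{F}$ inside $\relsys{U}$, contradicting $\relsys{U}\in\Forb(\F)$; thus $C^{(1)}$ and $C^{(2)}$ lie in distinct $\Aut(\relsys{U})$-orbits on $n$-tuples, and the natural bijection $g\colon C^{(1)}\to C^{(2)}$ does not extend to any automorphism of $\relsys{U}$. To establish $\rc(\relsys{U})\geq n$ one then shows that $g$ preserves every invariant relation of arity $\leq n-1$, i.e.\ for every proper subtuple $\vec{s}\subsetneq C$ the tuples $\vec{s}$ and $g(\vec{s})$ lie in a common $\Aut(\relsys{U})$-orbit; this identifies $g$ as a partial isomorphism of $(\relsys{U},\Inv_{\leq n-1}(\relsys{U}))$ that fails to extend. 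The companion bound $\lc(\relsys{U})\geq n$ is obtained analogously: assuming an ultrahomogeneous lift $\relsys{X}$ of $\relsys{U}$ of arity $\leq n-1$, no $n$-ary lift-relation is available, so the lift-type of a copy of $C$ in $\relsys{X}$ is already determined by the lift-types of its proper subtuples, which lets one choose lifted copies of $\relsys{F}_1$ and $\relsys{F}_2$ in $\Age(\relsys{X})$ with isomorphic $C$-parts and amalgamate them to a lift of $\relsys{F}$ in $\Age(\relsys{X})$, whose shadow embeds $\relsys{F}$ into $\relsys{U}$ --- a contradiction.

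The main obstacle is showing that the $(\leq n-1)$-ary orbit data agree on $C^{(1)}$ and $C^{(2)}$ (equivalently, that the two lifted copies can be chosen with matching $C$-parts). Here the minimality of $C$ is essential: for any $c\in C$, the proper substructure $\relsys{F}\setminus\{c\}$ of the core $\relsys{F}$ still lies in $\Forb(\F)$ (again by the core argument) and embeds into $\relsys{U}$, exhibiting a copy of $C\setminus\{c\}$ that is simultaneously completed by pieces of $\relsys{F}_1$ and of $\relsys{F}_2$. Using $\omega$-categoricity the orbits on $k$-tuples are finite (so $\relsys{U}$ is automatically $\omega$-saturated), and a back-and-forth argument powered by these joint completions produces embeddings of $\relsys{F}_1$ and $\relsys{F}_2$ whose $C$-parts realise the same type on every proper subtuple while still being pinned in the two obligatorily distinct $n$-tuple orbits. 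Intuitively, each vertex of $C$ is needed only to obstruct the full separation; once any single vertex is omitted the obstruction to joint realisation disappears, which is exactly what propagates the orbit coincidence down to every proper subtuple of $C$.
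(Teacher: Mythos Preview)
Your approach differs substantially from the paper's. The paper does not argue directly inside $\relsys{U}$; it invokes an external result (Theorem~\ref{aritythm}, from \cite{BubickaBubak}) proved by a Ramsey-type construction: if $\Forb(\F)$ admits an ultrahomogeneous universal lift using finitely many new relations of arity at most $r$, then every minimal g-separating g-cut in $\F$ has size at most $r$. One then applies this to the lift of $\relsys{U}$ by $\Inv_{\leq k}(\relsys{U})$, noting that $\omega$-categoricity makes each $\Inv_k(\relsys{U})$ finite.

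The step you yourself call ``the main obstacle'' is a genuine gap. From the embedding of $\relsys{F}\setminus\{c\}$ you obtain, for each fixed $c\in C$, a single copy of $C\setminus\{c\}$ in $\relsys{U}$ carrying pieces of both sides --- in fact only of $\relsys{F}_i\setminus\{c\}$, not of the full $\relsys{F}_i$. This shows that \emph{some} $(n-1)$-tuple of the right isomorphism type admits both kinds of partial extensions, but it does not produce a single pair of embeddings $e_1\colon\relsys{F}_1\to\relsys{U}$, $e_2\colon\relsys{F}_2\to\relsys{U}$ for which $e_1(C\setminus\{c\})$ and $e_2(C\setminus\{c\})$ lie in the same orbit \emph{simultaneously for all} $c\in C$. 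Your witnesses vary with $c$, and an ordinary back-and-forth offers no mechanism to reconcile them: one would need an extension lemma of the shape ``any partial embedding of $\relsys{F}_i$ can be completed while hitting a prescribed $(\leq n-1)$-type on $C$'', and nothing about an arbitrary $\omega$-categorical universal $\relsys{U}$ supplies that. The lift version has the same defect: knowing that the new relations have arity at most $n-1$ tells you the lift-type of a copy of $C$ is determined by its $(n-1)$-subtuples, but not that some lift of $\relsys{F}_1$ and some lift of $\relsys{F}_2$ in $\Age(\relsys{X})$ restrict to the \emph{same} lift of $C$. Matching these finitely many types across the two sides is exactly the combinatorial core that the Ramsey-type argument of \cite{BubickaBubak} handles; the paper even remarks that the appeal to Ramsey methods here ``is not a technical finesse but this is in a way necessary''.
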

We use of the following result proved by a special Ramsey-type construction.
This is not a technical finesse but this is in a way necessary. It has been shown by \cite{N1,N4} that Ramsey classes are related to ultrahomogeneous structures. This connection has been elaborated in the context of topological dynamics in \cite{KPT}.

\begin{theorem}[\cite{BubickaBubak}]
\label{aritythm}
Let $\F$ be a finite minimal family of finite connected relational structures
and $\K$ a lift of class $\Forb(\F)$ adding finitely many new relations of
arity at most $r$. If $\K$ contains ultrahomogeneous lift $\relsys{U}$ that is universal for $\K$ then
the size of minimal g-separating g-cuts of $\relsys{F}\in \F$ is bounded by $r$.
\end{theorem}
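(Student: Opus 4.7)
The plan is to prove the contrapositive: assuming some $\relsys{F}\in\F$ has a minimal g-separating g-cut $C$ of size $n>r$, I will show that no ultrahomogeneous $\relsys{U}\in\K$ that is universal for $\K$ can use only relations of arity at most $r$. First I would unpack the cut. By the definition of a minimal g-separating g-cut, there are substructures $\relsys{A}_1,\relsys{A}_2$ of $\relsys{F}$ induced on $A_1\cup C$ and $A_2\cup C$, where $A_1,A_2$ are distinct connected components of $G_\relsys{F}\setminus C$ and $C$ is exactly the intersection of their neighborhoods. Because $C$ is a g-cut, no tuple of $\relsys{F}$ has vertices both in $A_1\setminus C$ and in $A_2\setminus C$, so $\relsys{F}$ is literally the gluing of $\relsys{A}_1$ and $\relsys{A}_2$ over $C$. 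Since $\F$ is minimal, both $\relsys{A}_1,\relsys{A}_2\in\Forb(\F)$, hence embed into $\relsys{U}$.

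Next I would introduce the ``lift-type'' of an embedded copy of $C$ in $\relsys{U}$: the isomorphism type of the lift structure induced on its image. Because every new relation has arity at most $r<n=|C|$, a lift-type on $C$ is completely determined by the family of lift structures on its $r$-element subsets, so the number of possible lift-types on $C$ is bounded by a function of $r$ and the (finite) set of new relations. Ultrahomogeneity of $\relsys{U}$ means that two embedded copies of $C$ with the same lift-type are mapped to each other by some automorphism of $\relsys{U}$.

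The core step is then to show that the lift-types coming from $\relsys{A}_1$-embeddings and those coming from $\relsys{A}_2$-embeddings must be disjoint: if an $\relsys{A}_1$-embedded copy of $C$ and an $\relsys{A}_2$-embedded copy had identical lift-type, ultrahomogeneity would let me identify their $C$-parts, and genericity of $\relsys{U}$ (no required tuples across the g-cut $C$ by the Gaifman condition) would let me realize $\relsys{A}_1$ and $\relsys{A}_2$ as meeting in exactly $C$, reconstructing the forbidden $\relsys{F}$ inside $\relsys{U}$. To force a collision of lift-types in spite of this, I would deploy the Ramsey-type construction advertised in \cite{BubickaBubak}: using the Ramsey-class machinery for classes of the form $\Forb(\F)$ (as established via \cite{N1,N4}), I would build a finite structure $\relsys{M}\in\Forb(\F)$ carrying a very rich system of copies of $\relsys{A}_1$ and $\relsys{A}_2$, each exhibiting many copies of $C$; after embedding $\relsys{M}$ into $\relsys{U}$, I would color every $r$-subset of every $C$-copy by its induced lift structure. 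Since only finitely many colors occur and $|C|=n>r$, the Ramsey property yields a monochromatic sub-configuration in which the full lift-types on all involved $C$-copies agree, regardless of whether they sit inside an $\relsys{A}_1$-copy or an $\relsys{A}_2$-copy. Combined with the disjointness observation of the previous paragraph, this produces the contradiction and hence the bound $n\leq r$.

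The main obstacle is the Ramsey step: one must actually certify that the relevant class (a suitable linearly ordered expansion of $\Forb(\F)$) has the Ramsey property, and that the Ramsey witness $\relsys{M}$ can be chosen inside $\Forb(\F)$ with embeddings of $\relsys{A}_1$ and $\relsys{A}_2$ whose $C$-copies lie in sufficiently rich Ramsey position. Two delicate points sit here: first, lifting the Ramsey statement from unlifted $\Forb(\F)$ to a setting where $r$-ary lift relations can be colored coherently; second, ensuring that passing to a monochromatic sub-configuration does not inadvertently force the two sides to overlap outside $C$ in a way that sidesteps the reconstruction of $\relsys{F}$. Connectedness of the members of $\F$ and minimality of the family are what make both points tractable, as they let the gluing along $C$ faithfully reproduce $\relsys{F}$ and prevent unintended homomorphisms from other members of $\F$ from appearing in the Ramsey witness.
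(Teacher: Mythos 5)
The paper does not actually prove Theorem~\ref{aritythm}: it is imported verbatim from \cite{BubickaBubak}, accompanied only by the remark that it is ``proved by a special Ramsey-type construction.'' Measured against that, your skeleton is the right one: decomposing $\relsys{F}$ as a free amalgam of $\relsys{A}_1$ and $\relsys{A}_2$ over the cut $C$, using minimality of $\F$ (cores, no homomorphisms between members) to place both pieces in $\Forb(\F)$, observing that lift-types of copies of $C$ are finite in number and determined by their $r$-element subsets since $r<|C|$, and using ultrahomogeneity to transport an $\relsys{A}_2$-extension onto an $\relsys{A}_1$-extending copy of $C$ with the same lift-type. One point you overcomplicate: you do not need any ``genericity'' to make the two pieces meet in exactly $C$. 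Because $C$ is a g-cut, every tuple of $\relsys{F}$ lies wholly in $\relsys{A}_1$ or wholly in $\relsys{A}_2$, so two embeddings into $\relsys{U}$ agreeing on $C$ already combine into a \emph{homomorphism} $\relsys{F}\to\relsys{U}$ no matter how their images overlap elsewhere, and that alone contradicts the shadow of $\relsys{U}$ lying in $\Forb(\F)$. The appeal to genericity is both unjustified (ultrahomogeneity plus universality does not provide it) and unnecessary.

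The genuine gap is the collision step, which is the entire content of the theorem and which you flag but do not fill. You need two copies of $C$ in $\relsys{U}$, one extending to $\relsys{A}_1$ and one to $\relsys{A}_2$, whose lift-types agree coherently under the canonical bijection. Your plan rests on ``the Ramsey-class machinery for classes of the form $\Forb(\F)$ (as established via \cite{N1,N4})'', but no such off-the-shelf machinery exists: \cite{N1,N4} concern the general relation between Ramsey classes and ultrahomogeneous structures, and the Ne\v set\v ril--R\"odl theorem applies to amalgamation classes defined by forbidden irreducible substructures, whereas $\Forb(\F)$ typically \emph{fails} amalgamation precisely at the g-separating g-cuts --- that failure is the raison d'\^etre of this whole paper. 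So the Ramsey witness $\relsys{M}$ with its rich system of $\relsys{A}_1$- and $\relsys{A}_2$-extending copies of $C$ must be built by a bespoke partite-type construction that is certified to stay inside $\Forb(\F)$, and the coloring argument must align the $r$-subset types of two \emph{specific} copies of $C$ sitting on opposite sides of the $\relsys{A}_1/\relsys{A}_2$ divide, not merely produce some monochromatic subobject. This construction is exactly what \cite{BubickaBubak} supplies and what your sketch replaces by a pointer; as written, the decisive step is assumed rather than proved.
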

The Theorem \ref{prop1} follows directly. Fix
$\omega$-categorical $\relsys{U}$ universal for $\Forb(\F)$.  By
$\omega$-categoricity of $\relsys{U}$ we know that
$\Inv_k(\relsys{A})$ is finite and we apply Theorem~\ref{aritythm}.  

\paragraph{{\bf Examples.}}
\begin{enumerate}
\item Let $\F_P$ consist of the Petersen graph and let $\relsys{U}$ be the canonical structure
for $\Forb(\F_P)$. Then $\rc(\relsys{U})=\lc(\relsys{U})=4$. Recall that the minimal g-separating g-cuts
of the Petersen graph are shown in Figure~\ref{fig:petersoni}.
\item The complexity of an $\omega$-categorical graph universal for $\Forb(\F_{C_n})$,
$n\geq 5$, (of graphs without odd cycles of length at most $n$) is at least 2.
Combining with Theorem \ref{thm:upperbound} we know that relational complexity
of the canonical universal structure for the class $\Forb(\F_{C_n})$ is 2. On
the other hand, however, this does not hold for the class  $\F_{odd}$. We have
already shown that the canonical universal graph for the class of all bipartite
graphs have relational complexity 2 and lift complexity 1.
Finiteness and minimality assumptions are thus both essential in Theorem~\ref{prop1}.
\end{enumerate}

\section{Concluding remarks}
We have shown examples of ages $\K$ where the relational complexity of the canonical
universal structure for $\K$ agrees with its lift complexity. This is the case
of $\Forb(\F)$ classes where $\F$ is finite family of finite connected
structures.  In the case of bipartite graphs the complexities disagree by one
(relational complexity is 2, while lift complexity is 1). It is also possible
to construct examples where relational complexity is finite and lift
complexity is 1.  One such example is the bow-tie free graph considered in
\cite{CherlinShelahShi:1999}.  To see this fact however an careful analysis of
its structure is needed that is out of scope of this paper.

Cameron and Ne\v set\v ril~\cite{CameronNesetril:2006} recently introduced
concept of homomorphism-homogeneous relational structures.  It would be
interesting to study the topic of this paper in the context of this type of
homogeneity.


%

\end{document}